\documentclass[a4paper,11pt,english,french]{amsart}

\usepackage[utf8]{inputenc}
\usepackage{hyperref}

\usepackage[T1]{fontenc}
\usepackage{libertine}
\usepackage[libertine]{newtxmath}



\usepackage{amssymb,calrsfs,babel}
\frenchbsetup{AutoSpacePunctuation=false,og=«,fg=»}

\renewcommand{\Bbb}{\mathbb}

\newcommand{\bL}{\mathbf{L}}
\newcommand{\bR}{\mathbf{R}}
\newcommand{\cC}{\mathcal{C}}

\newcommand{\cF}{\mathcal{F}}
\newcommand{\cG}{\mathcal{G}}

\newcommand{\cO}{\mathcal{O}}
\newcommand{\cP}{\mathcal{P}}
\newcommand{\cR}{\mathcal{R}}

\newcommand{\rR}{\overset{\circ}R}

\renewcommand{\leq}{\leqslant}
\renewcommand{\geq}{\geqslant}

\newcommand{\euc}{\mathrm{e}}

\DeclareMathOperator{\rk}{rk}
\DeclareMathOperator{\sign}{sign}
\DeclareMathOperator{\tr}{tr}

\DeclareMathOperator{\Vol}{Vol}

\DeclareMathOperator{\Ric}{Ric}

\DeclareMathOperator{\Sym}{Sym}

\newtheorem{theo}{Théorème}
\newtheorem{prop}[theo]{Proposition}
\newtheorem{lemm}[theo]{Lemme}

\theoremstyle{definition}

\theoremstyle{remark}

\newtheorem*{rema*}{Remarque}

\begin{document}

\author{Olivier Biquard}
\title[Non dégénérescence et singularités des métriques d'Einstein]{Non dégénérescence et singularités des métriques d'Einstein asymptotiquement hyperboliques en dimension 4}
\address{UPMC Université Paris 6 et École Normale Supérieure, UMR 8553 du CNRS}

\selectlanguage{english}
\begin{abstract}
  We prove that desingularizations of non degenerate Poincaré-Einstein metrics with $A_1$ singularities remain non degenerate. In principle this enables a recursive procedure to desingularize the other Fuchsian singularities. We illustrate this procedure by the $A_2$ case.
\end{abstract}

\maketitle
\selectlanguage{french}

\section*{Introduction}

Soit $(M_0^4,g_0)$ une variété d'Einstein asymptotiquement hyperbolique \cite{FefGra85} : cela signifie que $M_0$ possède un bord $\partial M_0=\{x=0\}$, où $x$ est une équation du bord, et que $g_0$ a un comportement «asymptotiquement hyperbolique» près du bord, à savoir $g_0\sim \frac{dx^2+\gamma_0}{x^2}$, où $\gamma_0$ est une métrique sur $\partial M_0$ ; en réalité, n'est bien définie par $g_0$ que la classe conforme $[\gamma_0]$, appelée infini conforme de $g_0$.

Supposons que $M_0$ ait un point singulier orbifold $p_0$ avec groupe $\Bbb{Z}_2$ (singularité $A_1$). Dans \cite{Biq13} on a montré que si $g_0$ est non dégénérée (au sens où la linéarisation de l'équation d'Einstein a un noyau $L^2$ trivial), et si la partie autoduale $\bR_+\in \Sym^2(\Omega_+)$ du tenseur de courbure vu comme endomorphisme symétrique des 2-formes (qui se décomposent en $\Omega^2=\Omega_+ \oplus \Omega_-$)  satisfait
\begin{equation}
  \label{eq:1}
  \det \bR_+^{g_0}(p_0)=0,
\end{equation}
alors il existe une famille $(g_t)$ de métriques d'Einstein lisses, asymptotiquement hyperboliques, sur une désingularisation topologique $M$ de $M_0$, dont la limite de Gromov-Hausdorff quand $t\rightarrow0$ est la métrique orbifold $g_0$. Ces métriques sont obtenues par recollement de $g_0$ avec des métriques de Eguchi-Hanson. En outre, si $\bR_+^{g_0}(p_0)$ est de rang maximal pour (\ref{eq:1}), c'est-à-dire de rang 2, alors l'espace des infinis conformes $\gamma$ de métriques d'Einstein orbifolds $g_0(\gamma)$ satisfaisant (\ref{eq:1}) est près de $\gamma_0$ une hypersurface $\cC_0 \subset \cC$, où $\cC$ est l'espace des métriques conformes sur $\partial M_0$. Les infinis conformes des métriques désingularisées $(g_t)$ sont nécessairement d'un côté de $\cC_0$ dans $\cC$ déterminé dans \cite{Biq16}, à savoir
\begin{equation}
  \label{eq:2}
  \det \bR_+^{g_0(\gamma)}(p_0)>0.
\end{equation}

Le premier objectif de cet article est de répondre à une question laissée en suspens dans \cite{Biq16} :
\begin{theo}\label{theo:1}
  Soit $(M_0^4,g_0)$ une variété d'Einstein asymptotiquement hyperbolique, avec une singularité orbifold $A_1$ au point $p_0$. Si $g_0$ est non dégénérée et le rang de $\bR_+^{g_0}(p_0)$ est égal à 2, alors les désingularisations $(g_t)$ sont non dégénérées pour $t>0$ petit.
\end{theo}
Une première application de ce théorème concerne la théorie (conjecturale) du degré qui doit compter le nombre de métriques d'Einstein sur $M$ d'infini conforme donné. Il permet de préciser le signe du changement de degré en traversant le mur, voir (\ref{eq:17}).

Une seconde application s'obtient grâce à une version plus générale du théorème \ref{theo:1} : l'énoncé s'applique aussi à la désingularisation partielle d'une singularité fuchsienne plus générale (singularités $A_k$, $D_k$ et $E_k$) par un espace ALE de rang 1 comme défini dans \cite{Biq16} ; par exemple, la désingularisation partielle d'une singularité $A_k$ laissant subsister une singularité $A_{k-1}$. En montrant que la désingularisation partielle demeure non dégénérée, le théorème permet l'itération du procédé de désingularisation. On peut donc en principe désingulariser n'importe quelle singularité fuchsienne de cette manière, pourvu qu'un certain nombre d'obstructions s'annulent. Déterminer précisément ces obstructions est ardu, et l'auteur n'a pu mener les calculs complets que pour la singularité suivante $A_2$ (le groupe cyclique $\Bbb{Z}_3$) :
\begin{theo}\label{th:2}
  Soit $(M_0,g_0)$ une variété d'Einstein, asymptotiquement hyperbolique, avec une singularité orbifold au point $p_0$ de type $A_2$, telle que le rang de $\bR_+^{g_0}(p_0)$ soit égal à 2. Alors, si une obstruction explicite sur le 2-jet de $\bR_+$ en $p_0$ s'annule, on peut désingulariser $(M_0,g_0)$ en une famille $(M,g_t)$ de métriques d'Einstein asymptotiquement hyperboliques sur une désingularisation topologique de $M_0$.
\end{theo}
En général, pour les singularités $A_k$, $D_k$ et $E_k$, on s'attend à trouver $k$ obstructions, la première étant (\ref{eq:1}), la $i$-ème portant sur le $2(i-1)$-jet de $\bR_+$ en $p_0$. En principe, la méthode itérative utilisée pour la singularité $A_2$ devrait s'étendre, mais obtenir exactement le bon nombre d'obstructions requiert de montrer certaines annulations qui ne sont pas a priori évidentes.

Une faiblesse du théorème \ref{th:2} est l'absence d'exemples concrets auxquels l'appliquer. On peut néanmoins imaginer une construction du type suivant : on sait désingulariser ces singularités dans le cadre Kähler-Einstein, pour des métriques asymptotiquement hyperboliques complexes. Or les infinis conformes des métriques asymptotiquement hyperboliques complexes (des structures CR) sont naturellement limites de métriques conformes (point de vue utilisé par exemple dans \cite{BiqHerRum07}). On devrait ainsi pouvoir approximer les métriques Kähler-Einstein asymptotiquement hyperboliques complexes par des métriques d'Einstein asymptotiquement hyperboliques réelles, et la désingularisation pour les métriques de Kähler-Einstein donnerait alors des exemples dans le cas réel.

Dans la section \ref{sec:calculs-sur-l-espace-ALE}, nous faisons des calculs précis sur la linéarisation de l'opérateur d'Einstein pour le développement formel qui intervient dans la procédure de désingularisation, ce qui nous permet dans la section \ref{sec:non-degenerescence} de démontrer le théorème \ref{theo:1}. Nous traitons alors, section \ref{sec:desing-A2}, le cas d'une singularité $A_2$, rendu possible par le processus itératif commencé dans \cite{Biq16} et la non dégénérescence prouvée dans le théorème \ref{theo:1}. Nous reportons à la section \ref{sec:germe-au-point} un autre ingrédient technique important, à savoir la génération de germes de métriques aux points singuliers à partir de l'infini conforme ; la méthode, nouvelle par rapport à \cite{Biq13}, permet d'énoncer les résultats en toute généralité. Finalement, la section \ref{sec:autres-singularites} considère une autre singularité (le groupe $\Bbb{Z}_4$, mais non inclus dans $SU(2)$) et donne un énoncé dans le cas de plusieurs points singuliers (proposition \ref{prop:plusieurs-points}).

\section{Calculs sur l'espace ALE}
\label{sec:calculs-sur-l-espace-ALE}

Rappelons le cadre utilisé dans \cite{Biq16}. On part d'une variété d'Einstein $(M_0^4,g_0)$, asymptotiquement hyperbolique, avec un point singulier orbifold $p_0$ de type $\Bbb{R}^4/\Gamma$, où $\Gamma$ est un sous-groupe fini de $SU_2$. On a $\Ric(g_0)=\Lambda g_0$ avec $\Lambda=-3$ (pour une meilleure clarté des formules, on écrira $\Lambda$ plutôt que sa valeur). On étudie le recollement de $(M_0,g_0)$ avec un espace asymptotiquement localement euclidien (ALE) $(Y^4,h_0)$ de Kronheimer \cite{Kro89a} : ici ALE signifie que $(Y,h_0)$ est asymptotique à $\Bbb{R}^4/\Gamma$ muni de sa métrique euclidienne $\euc$, où $\Gamma$ est bien le même groupe que celui au point $p_0\in M_0$. On suppose en outre que $Y$ est un orbifold de rang 1, c'est-à-dire $b_2^{orb}(Y)=1$. En particulier on utilisera le cas d'une singularité $A_k$ où $Y$ peut être choisi de rang 1 avec une unique singularité de type $A_{k-1}$ (si $k=1$, l'espace $Y$ se réduit à la métrique de Eguchi-Hanson sur $T^*\Bbb{C}P^1$).

On utilisera une fonction $R$ sur $Y$, déterminée par :
\begin{itemize}
\item près de l'infini, on a des coordonnées ALE $(x^i)$ telles que, après relèvement de l'action de $\Gamma$, la métrique $h_0$ se compare à la métrique euclidienne à l'ordre 4 : on prend $R$ le rayon dans ces coordonnées, alors $h_0-\euc=O(R^{-4})$ et plus généralement $\nabla^k(h_0-\euc)=O(R^{-4-k})$ ;
\item la fonction $R$ est prolongée en une fonction $C^\infty$ à l'intérieur de $Y$, telle que $R\geq 1$.
\end{itemize}

La désingularisation partielle de $g_0$ de \cite{Biq13,Biq16} est obtenue par un recollement entre $g_0$ et $t h_t$, où $h_t$ est un développement
\begin{equation}
 h_t^{[n]} = h_0 + t h_1 + \cdots + t^n h_n.\label{eq:3}
\end{equation}
En pratique on utilisera les ordres $n=1$ ou $2$ : le terme $h_1$ est quadratique à l'infini, d'asymptotique donnée par les termes d'ordre 2 de $g_0$ en $p_0$ ; le terme $h_2$ d'ordre 4 à l'infini, d'asymptotique donnée par les termes d'ordre 4 de $g_0$ en $p_0$ ; en outre, le développement $h_t$ est le début d'un développement formel pour une solution de l'équation $\Ric(t h_t)=\Lambda t h_t$, donc
\begin{equation}
 \Ric(h_t^{[n]}) = t \Lambda h_t^{[n]} + O(t^{n+1}).\label{eq:4}
\end{equation}

Notons $L_{h_t}$ la linéarisation de l'équation $\Ric-t\Lambda$ en $h_t$, c'est-à-dire
$$ L_{h_t} = \frac12 \nabla^*_{h_t}\nabla_{h_t} + \rR_{h_t} - \delta_{h_t}^* B_{h_t}, $$
où $B=\delta+\frac12 d\tr$ est l'opérateur de Bianchi, et $\delta^*$  la symétrisation de la dérivée covariante. A priori l'expression de $L_{h_t}$ n'a pas de sens car $h_t$ n'est pas une métrique ($h_1$ diverge à l'infini), mais il y a un développement formel, que nous utiliserons seulement à l'ordre 1 :
\begin{equation}
 L_{h_t}^{[1]} = L_0 + t L_1.\label{eq:5}
\end{equation}

On va maintenant tirer profit du calcul des termes d'ordre 2 du tenseur de Ricci fait dans \cite{Biq16} pour expliciter $L_1$ sur le noyau de $L_0$, ce qui permettra de comprendre le noyau de $L_{h_t}^{[1]}$. Bien sûr, pour avoir un noyau de dimension finie, on rajoute la condition de jauge $B_{h_0}h=0$, qui réduit à $L_0$ à l'opérateur $P_0 = \frac12 \nabla_{h_0}^*\nabla_{h_0} + \rR_{h_0}$. Celui-ci préserve la décomposition $\Sym^2 TY = \Bbb{R} \oplus \Sym_0^2 TY$. Rappelons que, $Y$ étant hyperkählérienne, le fibré des 2-formes autoduales $\Omega_+$ est trivialisé par les trois formes de Kähler $\omega_1$, $\omega_2$ et $\omega_3$, et donc $\Sym_0^2TY \simeq \Omega_+ \otimes \Omega_- \simeq \Bbb{R}^3 \otimes \Omega_-$. Dans cette identification, l'opérateur $P_0$ s'identifie à $d_-d_-^*$, donc son noyau $L^2$ est donné par la cohomologie $L^2$ de $Y$ : $\ker_{L^2}P_0 = \Bbb{R}^3 \otimes H^2_{L^2}(Y)$. Si $Y$ est de rang 1, alors $H^2_{L^2}(Y)$ est engendré par une seule forme antiautoduale $\Omega$, et donc $\ker_{L^2}P_0$ est engendré par les $o_i=\omega_i\circ \Omega$ (l'opération bilinéaire ici est la composition des 2-formes vues comme endomorphismes antisymétriques). Dans \cite{Biq16}, la structure complexe $J_1$ correspondant à $\omega_1$ est choisie comme celle de la résolution partielle de $\Bbb{C}^2/\Gamma$ ; alors $Y$ contient une unique courbe holomorphe $\Sigma$, et $\Omega$ est choisie Poincaré duale à $2\pi\Sigma$. Pour fixer complètement $h_0$, on fixe le volume de $\Sigma$ :
\begin{equation}
  \label{eq:6}
  \Vol \Sigma = 2\pi .
\end{equation}

Il reste à rappeler la construction de $h_1$ : l'équation (\ref{eq:4}) s'explicite en écrivant un développement formel de $\Ric-t\Lambda$, dont le premier terme est la linéarisation de $\Ric$ :
\begin{equation}
  \label{eq:7}
  \Ric(h_t)-t\Lambda h_t = t (L_0 h_1 - \Lambda h_0) + t^2 (Qh_1 - \Lambda h_1) + \cdots
\end{equation}
où $Q$ est quadratique en $h_1$. Notons $H_1=H_{ijkl} x^ix^jdx^kdx^l$ les termes d'ordre $2$ de $g_0$ en $p_0$, donc $g_0=\euc + H_1 + O(x^4)$. Quitte à faire agir un difféomorphisme local en $p_0$, on peut supposer que $H_1$ est en jauge de Bianchi : $B_\euc H_1=0$. Ces termes d'ordre 2 déterminent la courbure riemannienne en $p_0$, donc on peut écrire $R(H_1)$ pour la courbure de $g_0$ au point $p_0$, et nous considérerons particulièrement la partie $\bR_+(H_1)\in \Sym^2(\Omega_+)$ de l'opérateur de courbure (nous notons de manière différente la courbure $R$ du fibré $\Omega^2$ et l'opérateur de courbure $\bR$, qui diffèrent par le signe). Alors $h_1$ est solution du système
\begin{equation}\label{eq:8}
  \begin{split}
  L_0 h_1 & = \Lambda h_0 + \sum_1^3 \lambda_i o_i, \\ 
  B_{h_0}h_1 & = 0, \\
  h_1 & \sim H_1 \text{ à l'infini}, \\
  \int_\Sigma \phi_i & = 0.
\end{split}
\end{equation}
Les $\phi_i\in \Omega_-$ sont déterminées en écrivant $h_1=\lambda h_0+ \sum_1^3 \omega_i\circ \phi_i$, et la condition sur $\phi_i$ vise à éliminer l'ambiguïté sur la solution provenant du noyau $\langle o_i\rangle$. Les $\lambda_i$ sont complètement déterminés par $H_1$ : il y a une constante $\lambda'\neq0$ telle que
\begin{equation}
  \label{eq:9}
  \lambda_i = \frac {\lambda'}2 \langle \bR_+(H_1)\omega_1,\omega_i\rangle .
\end{equation}
L'annulation des trois coefficients $\lambda_i$ signifie $\bR_+(H_1)\omega_1=0$, donc que $\bR_+(H_1)$ a un noyau, ce qui est la source de la condition $\det \bR_+^{g_0}(p_0)=0$, puisqu'on peut toujours supposer, quitte à faire agit un élément de $SO_3$, que ce noyau est engendré par $\omega_1$.

La proposition suivante donne la raison profonde pour laquelle les métriques désingularisées $(g_t)$ seront non dégénérées : le terme de premier ordre $L_1$ est inversible sur le noyau de $L_0$ :
\begin{prop}\label{prop:L1}
  Supposons que $g_0$ satisfasse la condition $\det \bR_+^{g_0}(p_0)=0$. Alors $L_1$ préserve le sous-espace de dimension 3 engendré par les $o_i$, et y agit par la matrice $\bR_+^{g_0}(p_0)-\Lambda$. En particulier, puisque $\Lambda=\tr \bR_+^{g_0}$, si $\bR_+^{g_0}(p_0)$ est de rang 2, alors $L_1$ est inversible sur $\langle o_i\rangle$.
\end{prop}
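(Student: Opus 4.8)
The plan is to read off $L_1$ from (\ref{eq:5}), to compute the endomorphism it induces on the three-dimensional space $\langle o_i\rangle=\ker_{L^2}P_0$, and to conclude by the trace identity $\Lambda=\tr\bR_+^{g_0}(p_0)$.

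First, since $\Ric$ is invariant under constant rescaling, the equation whose linearization at $h_t=h_0+th_1+\cdots$ defines $L_{h_t}$ is $\Ric(h_t)-t\Lambda h_t=0$, compatibly with (\ref{eq:4}). Differentiating $t\mapsto D\Ric_{h_t}-t\Lambda$ at $t=0$ gives
\begin{equation*}
  L_1=D^2\Ric_{h_0}(h_1,\cdot)-\Lambda,
\end{equation*}
the mixed second variation of the Ricci tensor in the direction $h_1$, corrected by $-\Lambda\,\Id$ coming from the term $-t\Lambda g$. Under the hypothesis $\det\bR_+^{g_0}(p_0)=0$ I may, after rotating by an element of $\SO_3$, assume $\ker\bR_+(H_1)=\langle\omega_1\rangle$; then (\ref{eq:9}) gives $\lambda_1=\lambda_2=\lambda_3=0$, so by (\ref{eq:8}) the tensor $h_1$ satisfies $L_0h_1=\Lambda h_0$, $B_{h_0}h_1=0$ and $h_1\sim H_1$. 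Since the $o_i=\omega_i\circ\Omega$ share the single factor $\Omega$, they are $L^2$-orthogonal with equal norm, so the induced endomorphism of $\langle o_i\rangle$ is read off from the pairings $\langle L_1o_i,o_j\rangle$, and the $-\Lambda$ term already contributes $-\Lambda\,\delta_{ij}$.

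It therefore remains to identify $\langle D^2\Ric_{h_0}(h_1,o_i),o_j\rangle$ with $\langle\bR_+^{g_0}(p_0)\,\omega_i,\omega_j\rangle$. Writing $P_0=\frac12\nabla^*\nabla+\rR_{h_0}$, the mixed second variation splits into the variation of the rough Laplacian, the variation of the curvature endomorphism $\rR_{h_0}$ in the direction $h_1$, and the variation of the gauge term $\delta^*B$. Using the explicit order-$2$ expansion of the Ricci tensor computed in \cite{Biq16}, the plan is to show that the gauge and rough-Laplacian contributions either annihilate the kernel or assemble into a divergence, leaving only the curvature variation; the latter acts through the self-dual factor $\Omega_+=\langle\omega_i\rangle$ of $o_i=\omega_i\circ\Omega$, and because $h_1\sim H_1$ its relevant part is exactly the self-dual curvature operator $\bR_+(H_1)=\bR_+^{g_0}(p_0)$ of the quadratic germ $H_1$. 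The resulting integral over $Y$ reduces to a pairing controlled by the cohomology class of the anti-self-dual form $\Omega$, and the normalization of $\Omega$ as Poincaré dual to $2\pi\Sigma$ together with $\Vol\Sigma=2\pi$ of (\ref{eq:6}) fixes the proportionality constant to $1$. This gives that $L_1$ preserves $\langle o_i\rangle$ and acts there by $\bR_+^{g_0}(p_0)-\Lambda$.

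I expect this last identification to be the \emph{main obstacle}: one must track the variations of $\nabla$, of the Hodge star relating $\Omega_+$ and $\Omega_-$, and of $\rR_{h_0}$ against the \emph{growing} tensor $h_1$, and verify that all extraneous terms cancel so that the constant is exactly $1$ rather than the $\lambda'/2$ appearing in (\ref{eq:9}); this is precisely where the order-$2$ Ricci computation of \cite{Biq16} is indispensable, and where one simultaneously checks that $L_1$ sends $\langle o_i\rangle$ back into itself. Granting the matrix, invertibility is immediate: for an Einstein $4$-metric $\tr\bR_+^{g_0}(p_0)=\Lambda$, so when $\det\bR_+^{g_0}(p_0)=0$ and the rank is $2$ the eigenvalues are $0,a,b$ with $a+b=\Lambda$ and $a,b\neq0$; hence $\bR_+^{g_0}(p_0)-\Lambda$ has eigenvalues $-\Lambda,-a,-b$, all nonzero since $\Lambda=-3$, and $L_1$ is invertible on $\langle o_i\rangle$.
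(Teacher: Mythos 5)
Your overall skeleton matches the paper's: you correctly read off $L_1=2B(h_1,\cdot)-\Lambda$ as the polarization of the quadratic term $Q$ of the Ricci expansion (\ref{eq:7}), you correctly invoke the order-$2$ Ricci computation of \cite{Biq16}, and your concluding eigenvalue argument ($0,a,b$ with $a+b=\Lambda$, hence $-\Lambda,-a,-b$ all nonzero) is exactly the paper's. But the heart of the proposition --- that $2B(h_1,o_i)$ equals $-o_i(R_+(H_1))$, pointwise and with constant exactly $1$ --- is precisely what you defer as the ``main obstacle,'' and the route you sketch for it is off-target. There is no integral over $Y$, no $L^2$-pairing, and the normalizations $\Omega=\mathrm{PD}(2\pi\Sigma)$, $\Vol\Sigma=2\pi$ from (\ref{eq:6}) play no role here (they fix $h_0$ elsewhere in the construction). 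The two facts that make the computation close are: (i) in the formula $Q(h)=\frac12[a,a]_--\phi(R_+)$ of \cite[lemme~3]{Biq16}, the connection variation $a$ and curvature variation $R_+$ in the direction $o_i=\omega_i\circ\Omega$ \emph{vanish identically} because $d\Omega=0$, so after polarizing at $(h_1,o_i)$ only the single cross term $-\phi_{o_i}(R_+(h_1))$ survives; and (ii) by \cite[lemme~8]{Biq16} the curvature variation $R_+(h_1)$ is \emph{constant} on $Y$, equal to its value at infinity $R_+(H_1)$. Together these give the pointwise identity $L_1o_i=-o_i(R_+(H_1))-\Lambda o_i=\sum_j\langle\bR_+(H_1)\omega_i,\omega_j\rangle\,o_j-\Lambda o_i$, which simultaneously shows that $L_1$ preserves $\langle o_i\rangle$ (your pairing approach only captures the component in that span) and identifies the matrix, with the sign coming from $R_+=-\bR_+$.

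Since you neither isolate the vanishing $a(o_i)=R_+(o_i)=0$ nor invoke the constancy of $R_+(h_1)$, the cancellations you hope for (``gauge and rough-Laplacian contributions \ldots assemble into a divergence'') are not established, and as written the proof does not go through. The fix is short once these two inputs are in hand; I would also note that the gauge term $\delta^*B$ is deliberately \emph{not} handled at this stage in the paper --- it is deferred to the correction $o_i^{[1]}=o_i+t\delta^*_{h_0}X_i$ of (\ref{eq:13}) and Proposition \ref{prop:P1} --- so folding it into the present computation, as your decomposition suggests, conflates two separate steps.
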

Plus explicitement, si les valeurs propres de $\bR_+^{g_0}(p_0)$ sont $0$, $\Lambda_2$ et $\Lambda_3$, alors $\Lambda=\Lambda_2+\Lambda_3$ donc les valeurs propres de $L_1$ sur l'espace des $o_i$ sont $-\Lambda$, $-\Lambda_3$ et $-\Lambda_2$, toutes non nulles pourvu que $\Lambda_2$ et $\Lambda_3$ soient non nulles.
\begin{proof}
   De l'équation (\ref{eq:7}) on déduit, en notant $Q(h)=B(h,h)$ avec $B$ symétrique, que le terme $L_1$ dans (\ref{eq:5}) est
\begin{equation}
  \label{eq:10}
  L_1 = 2 B(h_1,\cdot) - \Lambda .
\end{equation}
Or les termes d'ordre 2 du tenseur de Ricci sont calculés dans \cite[lemme 3]{Biq16} : si on écrit le premier ordre de déformation $h_1=\lambda h_0+\sum_1^3\omega_i\circ \phi_i$, où $\phi_i\in \Omega_-$, et qu'on suppose $h_1$ en jauge de Bianchi ($B_{h_0}h_1=0$), alors la connexion induite sur $\Omega_+$ est modifiée à l'ordre 1 par la 1-forme $a=\sum_1^3 \omega_i\otimes *d\phi_i$, la courbure sur $\Omega_+$ par $R_+=\sum_1^3 \omega_i\otimes d*d\phi_i$, et les termes quadratiques de la partie sans trace du tenseur de Ricci, vue comme section de $\Omega_+\otimes \Omega_-$, sont
$$ Q(h_1) = \frac12 [a,a]_- - \phi(R_+), $$
où $\phi:\Omega_+\rightarrow \Omega_-$ est donné par $\phi(\omega_i)=\phi_i$.

Évaluons à présent $L_1$ sur $o_i=\omega_i\circ\Omega$. Rappelons de \cite[lemme 8]{Biq16} qu'au premier ordre dans la direction $h_1$, la courbure $R_+(h_1)$ est constante et égale à sa valeur à l'infini, $R_+(H_1)$. Par ailleurs, comme $d\Omega=0$, la variation au premier ordre de $a$, et donc de $R$, dans la direction $o_i$ est triviale, et dans (\ref{eq:10}) ne subsiste donc que
$$ L_1o_i = - o_i(R_+(H_1)) - \Lambda o_i . $$
Comme la courbure $R_+$ de $\Omega_+$ est l'opposé de l'opérateur de courbure $\bR_+$, la proposition s'en déduit.
\end{proof}

La proposition précédente ne prend pas en compte la jauge. En général, pour obtenir près d'une métrique $h$ les solutions de l'équation d'Einstein en jauge de Bianchi, on résoud l'équation $\Ric(g)-\Lambda g+\delta_g^*B_hg=0$, dont la linéarisation  en $g$ est
$$ P_h = L_h + \delta^*_h B_h = \frac12 \nabla^*_h\nabla_h + \rR_h. $$
Les éléments $o_i\in \ker L_{h_0}$ sont bien en jauge de Bianchi (c'est-à-dire dans le noyau $B_{h_0}$), mais rien ne dit que tel soit encore le cas à l'ordre 1 : comme pour $L$, on peut considérer les développements à l'ordre 1
\begin{equation}
  \label{eq:11}
  P_{h_t}^{[1]} = P_0 + t P_1, \quad B_{h_t}^{[1]} = B_0 + t B_1,
\end{equation}
et on a dans $P_{h_t}^{[1]}o_i$ un terme $t\delta^*_{h_0}B_1o_i$ qui est a priori du même ordre que $L_{h_t}^{[1]}o_i$. Pour y remédier on peut corriger la jauge à l'ordre 1 : soit $X_i$ un champ de vecteurs tel que
  \begin{equation}
    \label{eq:12}
    B_{h_0}\delta^*_{h_0}X_i = - B_1o_i .
  \end{equation}
  Comme $B\delta^*=\frac12 \nabla^*\nabla$ pour une métrique Ricci plate, et $B_1o_i = O(R^{-3})$ dans $Y$ (en effet $o_i=O(R^{-4})$ et $h_1=O(R^2)$ donne des termes $O(R)$ dans la connexion de Levi-Civita), une solution existe avec $X_i=O(R^{-1})$ et plus généralement $\nabla^kX_i=O(R^{-1-k})$. On considère alors la correction à l'ordre 1 de $o_i$ en
  \begin{equation}
    \label{eq:13}
    o_i^{[1]} = o_i + t \delta^*_{h_0}X_i .
  \end{equation}
  On obtient les contrôles
  \begin{equation}
    \label{eq:14}
    B_{h_t}^{[1]}o_i^{[1]} = O(t^2 R^{-1}), \quad L_{h_t}^{[1]}o_i^{[1]} = L_{h_t}^{[1]}o_i + O(t^2R^{-2}), 
  \end{equation}
d'où se déduit finalement, avec la proposition \ref{prop:L1} :
\begin{prop}\label{prop:P1}
  Notons $\bL$ la matrice symétrique $\bR_+^{g_0}(p_0)-\Lambda$. On a les contrôles
  \begin{align*}
    L_{h_t}^{[1]}o_i^{[1]} &= t \bL_i^j o_j^{[1]} + O(t^2R^{-2}) , \\
    P_{h_t}^{[1]}o_i^{[1]} &= t \bL_i^j o_j^{[1]} + O(t^2R^{-2}).
  \end{align*}
  Plus généralement, les dérivées $k$-ièmes sont contrôlées en $t^2R^{-2-k}$. \qed
\end{prop}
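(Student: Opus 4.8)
The plan is to obtain both displayed estimates by assembling the two ingredients already established: the gauge-corrected controls (\ref{eq:14}) and the spectral description of $L_1$ on $\langle o_i\rangle$ furnished by Proposition \ref{prop:L1}. No new computation is needed; the only task is to track orders in $t$ and in $R^{-1}$ with care, and to verify that interchanging $o_i$ with its gauge correction $o_i^{[1]}$ is harmless at the order claimed.

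For the first line, I would start from the second relation in (\ref{eq:14}), $L_{h_t}^{[1]}o_i^{[1]} = L_{h_t}^{[1]}o_i + O(t^2R^{-2})$, which reduces the problem to evaluating $L_{h_t}^{[1]}$ on the uncorrected $o_i$. Expanding $L_{h_t}^{[1]} = L_0 + tL_1$ and using $L_0 o_i = 0$ (the $o_i$ lie in $\ker P_0\cap\ker B_{h_0}$, hence in $\ker L_0$), I obtain $L_{h_t}^{[1]}o_i = tL_1 o_i = t\bL_i^j o_j$ by Proposition \ref{prop:L1}. It then remains to restore the gauge correction in the leading term: since $o_j^{[1]} - o_j = t\delta^*_{h_0}X_j$ and $X_j = O(R^{-1})$ forces $\delta^*_{h_0}X_j = O(R^{-2})$, replacing $o_j$ by $o_j^{[1]}$ costs only $t\bL_i^j(o_j - o_j^{[1]}) = O(t^2R^{-2})$, yielding $L_{h_t}^{[1]}o_i^{[1]} = t\bL_i^j o_j^{[1]} + O(t^2R^{-2})$.

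For the second line I would invoke the identity $P_h = L_h + \delta^*_h B_h$, so that the difference $P_{h_t}^{[1]}o_i^{[1]} - L_{h_t}^{[1]}o_i^{[1]}$ is the order-one truncation of the gauge term $\delta^*_h B_h$ evaluated on $o_i^{[1]}$. Its dominant contribution is $\delta^*_{h_0}$ acting on $B_{h_t}^{[1]}o_i^{[1]}$, which by the first control in (\ref{eq:14}) is already $O(t^2R^{-1})$; since $\delta^*_{h_0}$ costs one derivative and hence one power of $R^{-1}$, this produces $O(t^2R^{-2})$, and one checks directly that the remaining terms of the truncation are of the same order. Combined with the first line, this gives $P_{h_t}^{[1]}o_i^{[1]} = t\bL_i^j o_j^{[1]} + O(t^2R^{-2})$.

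The derivative statement follows by repeating this bookkeeping on $\nabla^k$ of each term: the objects involved --- the $o_i$, the correction $\delta^*_{h_0}X_i$, and the remainders in (\ref{eq:14}) --- have all been arranged so that each covariant derivative improves the decay by one power of $R^{-1}$, so the $O(t^2R^{-2})$ remainders become $O(t^2R^{-2-k})$. I do not expect any real obstacle here, precisely because the analytic content has been pushed upstream: the one genuinely delicate point is the solvability of the gauge equation (\ref{eq:12}) with the weighted bounds $\nabla^kX_i = O(R^{-1-k})$, which rests on the invertibility of $B_{h_0}\delta^*_{h_0} = \frac12\nabla^*_{h_0}\nabla_{h_0}$ in the appropriate weighted space. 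Granting that, the present statement is a pure consequence of (\ref{eq:14}) and Proposition \ref{prop:L1}.
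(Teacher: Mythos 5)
Votre démonstration est correcte et suit exactement la voie du texte : la proposition y est énoncée avec \qed comme conséquence immédiate des contrôles (\ref{eq:14}) et de la proposition \ref{prop:L1}, et vous ne faites qu'expliciter cet assemblage (réduction à $L_{h_t}^{[1]}o_i=tL_1o_i$ via $L_0o_i=0$, remplacement de $o_j$ par $o_j^{[1]}$ au prix de $O(t^2R^{-2})$, et traitement du terme de jauge $\delta^*B$ grâce au premier contrôle de (\ref{eq:14})). Rien à redire.
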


\section{Non dégénérescence}
\label{sec:non-degenerescence}

Nous montrons dans cette section que les solutions à l'équation d'Einstein construites par désingularisation de $g_0$ sont non dégénérées. Il nous faut rappeler le procédé de recollement de \cite{Biq13} : choisissons en $p_0$ des coordonnées $(x^i)$ telles qu'on ait un développement $g_0=\euc + H_1 + H_2 + \cdots$, avec $H_i$ d'ordre $2i$ et $B_\euc H_i=0$. Soit $r$ le rayon dans ces coordonnées, étendu sur $M_0$ de sorte qu'en dehors d'un voisinage de $p_0$ la fonction $r$ soit constante, égale à $1$. On fabrique une solution approchée $g_t$ sur une désingularisation topologique $M$ obtenue en recollant :
\begin{itemize}
\item la métrique $g_0$ sur $M_0$ sur la région $M^t = \{ r\geq \frac12 t^{\frac14} \}$ ;
\item la métrique $th_t^{[1]}$ sur $Y$ sur la région $Y^t = \{ R\leq 2t^{-\frac14} \}$.
\end{itemize}
L'identification entre les anneaux $A_M^t=\{ \frac12 t^{\frac14} \leq r \leq 2 t^{\frac14} \}\subset M_0$ et $A_Y^t=\{ \frac12 t^{-\frac14} \leq R \leq 2 t^{-\frac14}\}\subset Y$ se fait par une homothétie de rapport $\sqrt t$, en posant $r=\sqrt t R$, qui envoie la métrique $h_t^{[1]}$ sur $th_t^{[1]}$ (à noter que $h_t^{[1]}$ est une vraie métrique sur $Y^t$, elle n'est pas seulement formelle). Compte tenu que $h_t^{[1]}=h_0+th_1$ est construite de sorte que $h_1$ coïncide avec les termes d'ordre 2 de $g_0$ en $p_0$, on fait par le recollement une erreur $O(r^4)$ sur l'anneau $A_M^t$ (et l'erreur sur les dérivées d'ordre $k$ est en $r^{4-k}$). 

L'analyse sur $M$ est traitée dans des espaces de Hölder idoines, définis dans \cite[§~7]{Biq13}, et notés $C^{k,\alpha}_{\delta_0,\delta_\infty;t}$, où $\delta_0$ est le poids à la singularité et $\delta_\infty$ le poids à l'infini ; la norme à poids d'une section $s$ d'un fibré $E$ est définie par :
\begin{itemize}
\item près de l'infini conforme $\partial M=\{x=0\}$, (on prolonge $x$ à l'intérieur de $M$ par la valeur $1$)
  $$ \| x^{-\delta_\infty}s \|_{C^{k,\alpha}} ; $$
\item sur la région $A_M^t$, on utilise
$$ \sum_0^k \sup r^{\delta_0+k}|\nabla^ks| + |r^{\delta_0+k+\alpha}\nabla^ks|_\alpha  $$
où $|u|_\alpha$  est classiquement $|u|_\alpha=\sup \frac{|u(x)-u(y)|}{d(x,y)^\alpha}$ ;
\item sur $Y_M^t$ on utilise
$$ t^{\frac{\delta_0}2} \big( \sum_0^k \sup R^{\delta_0+k}|\nabla^ks| + |R^{\delta_0+k+\alpha}\nabla^ks|_\alpha \big) . $$
\end{itemize}
Toutes les normes sont prises par rapport à $g_t$. Le facteur $t^{\frac{\delta_0}2}$ permet la coïncidence des normes sur la région intermédiaire $A^t_M$ (homothétique à $A^t_Y$).

On étend aussi les tenseurs $o_i^{[1]}$ sur $Y$ comme dans \cite[§~13]{Biq13} : à l'infini sur $Y$, on a $o_i\sim \frac{\eta_i}{R^6}$ où $\eta_i$ est un 2-tenseur symétrique sur $\Bbb{R}^4$ dont les coefficients sont des formes quadratiques ; or on peut trouver sur $M_0$ des tenseurs $\bar o_i$ tels que $B_{g_0}\bar o_i=0$, $P_{g_0}\bar o_i=0$, et $L^2$ à l'infini ; le recollement de $o_i^{[1]}$ avec $t\bar o_i$ sur l'anneau $A^t$ fournit un 2-tenseur $o_{i,t}$ sur le recollement $M$.

\begin{lemm}\label{lemm:est-r}
  On a $P_{g_t}o_{i,t} = \bL_i^j o_{j,t} + r_{i,t}$, avec
  $$\begin{cases}
    |r_{i,t}|_{g_t} \leq c t r^{-4} & \text{ sur } M^t, \\
    |r_{i,t}|_{g_t} \leq c R^{-2} & \text{ sur } Y^t\setminus A_Y^t,
  \end{cases}$$
  et les estimations qui en découlent sur les dérivées ($|\nabla^kr_{i,t}|_{g_t} \leq c t r^{-4-k}$ sur $M^t$, etc.) En particulier, $\|r_{i,t}\|_{C^{2,\alpha}_{\delta_0+2,\delta_\infty;t}} \le c t^{\frac12+\frac{\delta_0}4}$.
\end{lemm}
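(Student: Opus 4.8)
L'idée directrice est de découper $M$ en trois régions — l'intérieur $Y^t\setminus A_Y^t$ de l'espace ALE, l'anneau de recollement $A^t$, et la partie $M^t\setminus A_M^t$ issue de $M_0$ — et de contrôler $r_{i,t}$ séparément sur chacune. L'outil commun est le comportement de $P$ sous homothétie : pour une constante $t>0$ on a $P_{th}=\frac1t P_h$, car la connexion de Levi-Civita est invariante par changement d'échelle tandis que $\nabla^*\nabla$ et $\rR$ se contractent en $\frac1t$.

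Sur $Y^t\setminus A_Y^t$, où $g_t=th_t^{[1]}$ et $o_{i,t}=o_i^{[1]}$, j'écrirais $P_{g_t}o_{i,t}=\frac1t P_{h_t^{[1]}}o_i^{[1]}$. Comme $P_{h_t^{[1]}}=P_0+tP_1+O(t^2)$ et que les termes d'ordre $\geq2$ de l'opérateur, appliqués à $o_i^{[1]}=O(R^{-4})$, fournissent $O(t^2R^{-2})$, la proposition \ref{prop:P1} donne $P_{g_t}o_{i,t}=\bL_i^j o_{j,t}+O(tR^{-2})$, d'où $|r_{i,t}|\leq cR^{-2}$ avec les dérivées en $R^{-2-k}$. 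Sur $M^t\setminus A_M^t$, où $g_t=g_0$ et $o_{i,t}=t\bar o_i$, la relation $P_{g_0}\bar o_i=0$ entraîne $r_{i,t}=-t\,\bL_i^j\bar o_j$ ; l'asymptotique $o_i\sim\eta_i/R^6$ devient, après l'homothétie $r=\sqrt tR$, $\bar o_i=O(r^{-4})$ près de $p_0$, ce qui donne $|r_{i,t}|\leq ctr^{-4}$ et $|\nabla^kr_{i,t}|\leq ctr^{-4-k}$.

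Le cœur de la preuve, et l'obstacle principal, est l'anneau $A^t$, où $o_{i,t}=\chi o_i^{[1]}+(1-\chi)t\bar o_i$ pour une troncature $\chi$ passant de $1$ à $0$. Trois erreurs s'y superposent : les termes de commutation $[\frac12\nabla^*\nabla,\chi]$, qui contractent $\nabla\chi=O(r^{-1})$ et $\nabla^2\chi=O(r^{-2})$ contre la différence $o_i^{[1]}-t\bar o_i$ ; le décalage entre $g_t$ et les métriques modèles, qui vaut $O(r^4)=O(t)$ sur $A^t$ ; et le reste $O(tR^{-2})$ de la proposition \ref{prop:P1}. Le point délicat sera de vérifier que $o_i^{[1]}$ et $t\bar o_i$ partagent la même asymptotique dominante $t\,\eta_i/r^6$, de sorte que leur différence et ses dérivées soient d'ordre strictement supérieur ; combinée aux poids de $\chi$ sur l'anneau, cette différence reste alors absorbée dans $|r_{i,t}|\leq ctr^{-4}$ sur $A^t\subset M^t$.

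Enfin je traduirais ces estimations ponctuelles dans la norme $\|\cdot\|_{C^{2,\alpha}_{\delta_0+2,\delta_\infty;t}}$. Sur $M^t$, le poids $r^{\delta_0+2}$ contre $|r_{i,t}|\leq ctr^{-4}$ donne $ctr^{\delta_0-2}$, de supremum (pour $\delta_0<2$) atteint en $r\sim t^{1/4}$ et valant $ct\cdot t^{(\delta_0-2)/4}=ct^{\frac12+\frac{\delta_0}4}$ ; sur $Y^t$, le facteur $t^{(\delta_0+2)/2}$ de la norme contre $R^{\delta_0+2}|r_{i,t}|\leq cR^{\delta_0}$ fournit au plus $ct^{1+\frac{\delta_0}4}$, négligeable. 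Les dérivées et la seminorme höldérienne se traitent pareillement, ce qui donnera $\|r_{i,t}\|_{C^{2,\alpha}_{\delta_0+2,\delta_\infty;t}}\leq ct^{\frac12+\frac{\delta_0}4}$.
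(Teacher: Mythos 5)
Votre démonstration suit essentiellement la même route que celle de l'article : découpage en trois régions, changement d'échelle $P_{th}=\frac1t P_h$ combiné à la proposition \ref{prop:P1} sur $Y^t$, utilisation de $P_{g_0}\bar o_i=0$ sur $M^t\setminus A_M^t$, coïncidence des termes principaux $t\eta_i/r^6$ de $o_i^{[1]}$ et $t\bar o_i$ sur l'anneau, puis calcul du suprémum pondéré avec le pire terme en $r\sim t^{1/4}$ sur $M^t$. Seule remarque mineure : sur $Y^t$, pour passer du reste $O(t^2R^{-2})$ de la proposition \ref{prop:P1} à la borne $cR^{-2}$ en norme $g_t$, il faut expliciter que les normes de 2-tenseurs pour $g_t$ et $\frac{g_t}t$ diffèrent d'un facteur $t$, ce que l'article fait et que vous laissez implicite.
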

Dans la région de transition, les deux estimations ne sont pas du même ordre ($R^{-2}=tr^{-2}$), ce qui est normal car les $o_{i,t}$ ne satisfont pas la même équation sur $Y$ et sur $M_0$ (sur $M_0$ on a $P_{g_t}o_{i,t}=0$), donc le recollement fait nécessairement apparaître un terme d'erreur de l'ordre de $o_{i,t}$, donc en $tr^{-4}$.
\begin{proof}
  On a $P_{\frac{g_t}t}=tP_{g_t}$ et les normes des 2-tenseurs pour $g_t$ et $\frac{g_t}t$ diffèrent d'un facteur $t$, donc l'estimation sur $Y^t$ résulte de la proposition \ref{prop:P1}. Sur $M^t\setminus A_M^t$, puisque $P_{g_0}\bar o_i=0$, l'erreur est $-L_i^j t\bar o_j$ qui est effectivement en $tr^{-4}$. Enfin, sur l'anneau de transition $A_M^t$, les termes principaux de $o_i^{[1]}$ et $t\bar o_i$ coïncident et sont tous deux en $tr^{-4}$, donc l'erreur dûe au recollement est en $tr^{-2}$, qui dans $P_{g_t}o_{i,t}$ donne une erreur en $tr^{-4}$ aussi. On en déduit l'estimation sur la norme à poids, où le plus mauvais terme est celui sur $M^t$.
\end{proof}

\begin{lemm}\label{lemm:inv-P}
  On suppose que $g_0$ est une métrique d'Einstein non dégénérée telle que $\bR_+^{g_0}(p_0)$ soit de rang 2. Alors l'opérateur $P_{g_t}:C^{2,\alpha}_{\delta_0,\delta_\infty;t}\rightarrow C^\alpha_{\delta_0+2,\delta_\infty;t}$ est inversible pour $t>0$ petit, et la norme de son inverse explose en $t^{-1}$ quand $t\rightarrow0$.
\end{lemm}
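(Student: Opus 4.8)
The plan is to prove invertibility of $P_{g_t}$ by the standard gluing strategy: establish a lower bound $\|P_{g_t}u\|_{C^\alpha_{\delta_0+2,\delta_\infty;t}} \geq c\,t\,\|u\|_{C^{2,\alpha}_{\delta_0,\delta_\infty;t}}$ uniform for small $t$, which simultaneously yields injectivity, the factor $t^{-1}$ in the norm of the inverse, and (combined with surjectivity of a suitable model or a continuity/degree argument) invertibility. I would argue by contradiction: suppose there is a sequence $t_n\to0$ and sections $u_n$ with $\|u_n\|_{C^{2,\alpha}_{\delta_0,\delta_\infty;t_n}}=1$ but $\|P_{g_{t_n}}u_n\|_{C^\alpha_{\delta_0+2,\delta_\infty;t_n}}=o(t_n)$. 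The weighted norms are designed so that one can extract Gromov--Hausdorff-type limits on the two pieces of the gluing and reach a contradiction with the hypotheses on $g_0$ and $Y$.

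First I would localize. Using a partition of unity adapted to the regions $M^t$, $A^t$, $Y^t$, I would show that either a definite fraction of the mass of $u_n$ concentrates on the orbifold side $M^t$ or on the ALE side $Y^t$ (the transition annulus contributes negligibly since the weights match there). On the orbifold side, after passing to the limit one obtains a section $u_\infty$ on $(M_0,g_0)$ lying in the correct weighted space, with $P_{g_0}u_\infty=0$; the vanishing and the decay at the orbifold point (encoded by $\delta_0$) force $u_\infty$ to be an $L^2$ solution, hence $u_\infty=0$ by the \emph{non-dégénérescence} hypothesis — except possibly for the contribution of the approximate kernel $\bar o_i$, which must be treated separately. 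On the ALE side, rescaling by $t^{-1}$ (so that $g_t/t$ converges to the Kronheimer metric $h_0$) and passing to the limit yields a solution of $P_{h_0}v_\infty=0$ on $Y$ in the relevant weighted space; since $Y$ is of rank $1$, its $L^2$-kernel is exactly $\langle o_i\rangle$, so $v_\infty$ lies in this three-dimensional space.

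The crux, and the step I expect to be the main obstacle, is handling the approximate kernel $\langle o_{i,t}\rangle$: both the $L^2$-cohomology of $Y$ and the decaying solutions $\bar o_i$ on $M_0$ produce elements on which $P_{g_t}$ is \emph{not} bounded below by a uniform multiple of $t$ in the naive sense — $P_{g_t}o_{i,t}$ is only $O(t)$, not $o(t)$. This is precisely where Proposition \ref{prop:P1} and Lemma \ref{lemm:est-r} enter: they show that on this subspace $P_{g_t}$ acts, to leading order in $t$, as $t\,\bL = t(\bR_+^{g_0}(p_0)-\Lambda)$, which is invertible exactly because $\bR_+^{g_0}(p_0)$ has rank $2$. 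The strategy is therefore to split the problem: on the $L^2$-orthogonal complement of $\langle o_{i,t}\rangle$ one has a genuine lower bound independent of $t$ (i.e. $\|P_{g_t}u\|\geq c\|u\|$) coming from the non-degeneracy of $g_0$ and from $Y$ having no small eigenvalues off its $L^2$-kernel, while on $\langle o_{i,t}\rangle$ itself the bound is $\|P_{g_t}u\|\geq c\,t\,\|u\|$ by invertibility of $\bL$.

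Concretely, I would write $u = u' + \sum_i c_i\,o_{i,t}$ with $u'$ transverse to the approximate kernel, project the equation $P_{g_t}u = f$ onto $\langle o_{j,t}\rangle$ and its complement, and solve the resulting $3\times 3$ block system. The diagonal block on $\langle o_{i,t}\rangle$ is $t\bL + O(t^2)$ by Proposition \ref{prop:P1}, hence invertible with inverse of norm $O(t^{-1})$; the off-diagonal coupling terms, controlled by Lemma \ref{lemm:est-r} (the error $r_{i,t}$ has weighted norm $O(t^{1/2+\delta_0/4})$), are small enough that a Schur-complement argument closes the estimate and produces the uniform bound $\|u\|_{C^{2,\alpha}_{\delta_0,\delta_\infty;t}}\leq c\,t^{-1}\|f\|_{C^\alpha_{\delta_0+2,\delta_\infty;t}}$. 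Since the weights $\delta_0,\delta_\infty$ are chosen in the non-$L^2$-indicial range on both $Y$ and $M_0$, the same scheme gives surjectivity (the model operators are surjective in these weighted spaces), so $P_{g_t}$ is an isomorphism for small $t$, with $\|P_{g_t}^{-1}\|=O(t^{-1})$, as claimed.
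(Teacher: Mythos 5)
Votre stratégie est essentiellement celle du texte : décomposer un tenseur en une partie dans le noyau approché $\langle o_{i,t}\rangle$ et une partie transverse, utiliser la proposition \ref{prop:P1} et le lemme \ref{lemm:est-r} pour voir que $P_{g_t}$ agit sur le noyau approché comme $t\bL$ à une erreur absorbable près, et conclure par l'inversibilité de $\bL$, le facteur $t^{-1}$ provenant de la normalisation des $o_{i,t}$. La seule différence est cosmétique : là où vous proposez de ré-établir l'estimée uniforme transverse par un argument de concentration-compacité, le texte cite directement l'estimée (\ref{eq:15}) pour le problème modifié $P_{g_t}h+x=u$ démontrée dans \cite[\S~9]{Biq13}, avec le supplémentaire $S$ défini par $\int_\Sigma\phi_i=0$ plutôt que l'orthogonal $L^2$.
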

\begin{proof}
Notons $\cO_t = \langle o_{i,t} \rangle$, que nous munissons de la norme $|\sum_1^3x_io_{i,t}|^2=\sum_1^3|x_i|^2$. Nous pouvons considérer $\bL$ comme un endomorphisme symétrique de $\cO_t$, et notons $r(\sum_1^3x_io_{i,t})=\sum_1^3x_ir_{i,t}$. L'opérateur $P_{g_t}$ est étudié dans \cite[§~9]{Biq13} : un supplémentaire $S$ de $\cO_t$ est défini par la condition
$$ \int_\Sigma \phi_i = 0, \quad\text{ où } h = \lambda h_0 + \sum_1^3 \omega_i\circ\phi_i \text{ sur }Y^t. $$
Le problème
$$ P_{g_t}h + x = u \text{ avec } h\in S, x\in \cO_t,$$
est résolu avec un contrôle indépendant de $t$ :
\begin{equation}
 t^{-\frac{\delta_0}2} \|h\|_{C^{2,\alpha}_{\delta_0,\delta_\infty;t}} + |x| \leq C t^{-\frac{\delta_0}2} \|u\|_{C^\alpha_{\delta_0+2,\delta_\infty;t}}.\label{eq:15}
\end{equation}
(Stricto sensu ce n'est pas exactement $\cO_t$ qui est considéré dans \cite{Biq13} mais il n'y a aucune différence dans l'estimation).

Écrivons à présent un tenseur arbitraire $h=s+x$ avec $s\in S$ et $x\in \cO$. En appliquant cette estimation et le lemme \ref{lemm:est-r} :
\begin{align*}
  t^{-\frac{\delta_0}2} \|s\|_{C^{2,\alpha}_{\delta_0,\delta_\infty;t}} + |\bL x|
  & \leq C t^{-\frac{\delta_0}2} \|P_{g_t}h + r(x)\|_{C^\alpha_{\delta_0+2,\delta_\infty;t}} \\
  & \leq C \big( t^{-\frac{\delta_0}2} \|P_{g_t}h\|_{C^\alpha_{\delta_0+2,\delta_\infty;t}} + t^{\frac12-\frac{\delta_0}4} |x| \big).
\end{align*}
Si $\bL$ est inversible, on déduit, pour $t$ assez petit,
\begin{equation}
 t^{-\frac{\delta_0}2} \|s\|_{C^{2,\alpha}_{\delta_0,\delta_\infty;t}} + |x| \leq C t^{-\frac{\delta_0}2} \|P_{g_t}h\|_{C^\alpha_{\delta_0+2,\delta_\infty;t}}\label{eq:16}
\end{equation}
ce qui prouve l'injectivité de $P_{g_t}$, avec une estimation uniforme. On remarquera que $\|x\|_{C^{2,\alpha}_{\delta_0,\delta_\infty;t}} \sim t^{\frac{\delta_0}2-1}|x|$, d'où l'assertion sur la norme de l'inverse.
\end{proof}

\begin{proof}[Démonstration du théorème \ref{theo:1}]
  Les métriques d'Einstein sont obtenues comme perturbations du recollement $g_t$ considéré dans le lemme précédent. Pour montrer le théorème, nous avons besoin du raffinement du recollement fait dans \cite[§~14]{Biq13} : au lieu de recoller seulement $th_t^{[1]}$ à $g_0$, on prend un terme de plus dans le développement formel (\ref{eq:3}), en considérant $h_t^{[2]}=h_t^{[1]}+t^2h_2$, où $h_2$ est d'ordre 4 à l'infini, $h_2=O(R^4)$, et ses termes d'ordre 4 à l'infini coïncident avec ceux de $g_0$ en $p_0$. On obtient alors une meilleure approximation, que nous noterons $g_t^{[2]}$, de la métrique d'Einstein. Enfin, il faut aussi noter la dépendance de toute la construction par rapport à l'infini conforme $\gamma_0$ de $g_0$, et si celui-ci varie nous noterons explicitement cette dépendance par $g_0(\gamma)$, $g_t(\gamma)$, etc.

  La première observation est que tout ce que nous avons fait avant demeure inchangé si on remplace $g_t$ par $g_t^{[2]}$. Le point essentiel est la proposition \ref{prop:P1} : par rapport à $h_t^{[1]}$, la métrique $h_t^{[2]}$ comporte un terme additionnel qui est $O(t^2R^4)$, et $o_i^{[1]}$ est $O(R^{-4}+tR^{-2})$, donc on obtient dans $P_{h_t}^{[1]}o_i^{[1]}$ un terme $O(t^2R^{-2}+t^3)$, c'est-à-dire $O(t^2R^{-2})$ sur la région $Y^t$. Le terme d'erreur dans la proposition \ref{prop:P1} est ainsi inchangé, ainsi que les estimations sur $P_{g_t^{[1]}}$.

  La seconde observation est que la désingularisation d'Einstein s'écrit (cf. l'équation (111) dans \cite{Biq13})
  $$ \hat g_t = g_t^{[2]}(\gamma_t) + u_t, \quad \|u_t\|_{C^{2,\alpha}_{\delta_0,\delta_\infty;t}} = O(t^{\frac32+\frac{\delta_0}4}), $$ 
  où $\gamma_t$ est un chemin d'infinis conformes : $\gamma_t=\gamma_0+t\gamma_1+O(t^{\frac32})$. Or, d'une part, la dépendance de $\gamma_t$ par rapport à $t$ entraîne sur $Y$ la modification à l'ordre $t$ de l'asymptotique du terme $h_1$, et donc perturbe $g_t^{[2]}$par un terme $O(t^2R^2)$, qui à nouveau introduit un terme d'erreur qui ne modifie pas les estimations ; d'autre part, on a sur $Y^t$ les estimations
$$ \big| \frac{u_t}t \big|_{\frac{g_t}t} \leq t^{-\frac{\delta_0}2} R^{-\delta_0} \|u_t\|_{C^{2,\alpha}_{\delta_0,\delta_\infty;t}} $$
(et les estimations similaires pour les dérivées), ce qui provoque dans les contrôles de la proposition \ref{prop:P1} une erreur dans $P_{\frac{g_t}t}o_{i,t}$ de l'ordre de
$$ t^{-\frac{\delta_0}2} R^{-6-\delta_0} \|u_t\|_{C^{2,\alpha}_{\delta_0,\delta_\infty;t}} $$
par rapport à la métrique $\frac{g_t}t$ (toujours sur $Y_t$), et donc, revenant à la métrique $g_t$, un terme d'erreur $\epsilon_{i,t}$ dans $P_{g_t}o_{i,t}$ controlé en
$$ \|\epsilon_{i,t}\|_{C^{2,\alpha}_{\delta_0,\delta_\infty;t}} \leq t^{-1} \|u_t\|_{C^{2,\alpha}_{\delta_0,\delta_\infty;t}} = O(t^{\frac12+\frac{\delta_0}4}) $$
qui est du même ordre que l'erreur sur $r_{i,t}$ dans le lemme \ref{lemm:est-r}, et l'erreur sur la partie $M^t$ contribue encore moins.

  Finalement, il en résulte que les estimations des erreurs restent les mêmes, donc le raisonnement fait pour le lemme \ref{lemm:inv-P} demeure inchangé.
\end{proof}

La démonstration du théorème donne plus que la non dégénérescence des métriques d'Einstein désingularisées, elle donne aussi le signe de la modification du degré d'Anderson \cite{And08} du problème de Dirichlet à l'infini pour les métriques d'Einstein quand on passe le «mur» $\cC_0$. Comme ce degré n'est toujours pas rigoureusement défini dans notre situation, nous nous contentons de considérations conjecturales et ne justifions pas toutes les assertions qui suivent.

Le degré d'Anderson est défini en comptant le nombre de métriques d'Einstein sur $M$, d'infini conforme donné, avec un signe donné par le nombre de valeurs propres strictement négatives de la linéarisation $P$. Dans \cite{Biq16} on montrait que, partant d'une métrique d'Einstein orbifold $g_0$ telle que $\rk \bR_+^{g_0}(p_0)=2$, les désingularisations sont du côté du mur donné par l'inégalité (\ref{eq:2}), mais le signe de cette solution n'était pas calculé. Nous pouvons y remédier grâce au calcul plus précis que nous avons fait.

Quand $t\rightarrow0$, la linéarisation $P$ de l'équation d'Einstein tend d'une part vers la linéarisation $P_{g_0}$ sur l'orbifold $M_0$, d'autre part sur l'instanton gravitationnel $Y$ se comporte comme $t^{-1}P_{h_0}$, qui est un opérateur positif ou nul, avec noyau de dimension 3 engendré par les $o_i$. Le spectre de la linéarisation $P$ se découple donc, quand $t\rightarrow0$, en 3 parties :
\begin{itemize}
\item le spectre de $P_{g_0}$ ;
\item la partie strictement positive du spectre de $P_{h_0}$, qui tend vers $+\infty$ à la vitesse $t^{-1}$ ;
\item une partie de dimension 3, correspondant au noyau de $P_{h_0}$, dont les valeurs propres tendent vers celles de $\bR_+^{g_0}(p_0)-\Lambda$, à savoir $-\Lambda_2$, $-\Lambda_3$ et $-\Lambda=+3$ si $\Lambda_2$ et $\Lambda_3$ sont les valeurs propres non nulles de $\bR_+^{g_0}(p_0)$.
\end{itemize}
On voit donc que par rapport à $P_{g_0}$, la linéarisation de la désingularisation compte un nombre de valeurs propres strictement négatives augmenté du nombre de valeurs propres strictement négatives de $\bR_+^{g_0}(p_0)-\Lambda$, donc le signe de la désingularisation est égal au signe de $g_0$ multiplié par $\sign(\Lambda_2\Lambda_3)$. Autrement dit, le changement de degré quand on passe du domaine $\det \bR_+^{g_0(\gamma)}(p_0)<0$ au domaine $\det \bR_+^{g_0(\gamma)}(p_0)>0$ est donné par
\begin{equation}
  \label{eq:17}
  \sign(g_0) \sign(\Lambda_2\Lambda_3).
\end{equation}

\section{La singularité $A_2$}
\label{sec:desing-A2}

Nous montrons à présent que la non dégénérescence démontrée dans le théorème \ref{theo:1} permet de mener à bien le programme de désingularisation esquissé dans \cite[§~6]{Biq16}. Nous commençons cette section par le cas d'une singularité $A_2$.

Rappelons d'abord brièvement la suite de la procédure de désingularisation. On utilise le développement formel $h_t^{[2]}$ satisfaisant (\ref{eq:4}) à l'ordre $n=2$ modulo les obstructions : le terme $h_2$ est obtenu en résolvant un système analogue à (\ref{eq:8}), et on obtient ainsi une solution de l'équation
\begin{equation}
  \label{eq:18}
  \Ric(h_t^{[2]}) = t\Lambda h_t^{[2]} + \sum_1^3 (t\lambda_i+t^2\mu_i)o_i + O(t^3).
\end{equation}
Les coefficients $\lambda_i$ sont écrits dans (\ref{eq:9}) et $\mu_1$ est déterminé dans \cite{Biq16}.

Pour faire le recollement, il faut aussi améliorer la coïncidence de $g_0$ avec $h_0$ : on sait que $h_0-\euc=O(R^4)$, en fait, plus précisément, il existe un développement à l'infini
\begin{equation}
  \label{eq:19}
  h_0 = \euc + \frac{K_2}{R^6} + \frac{K_3}{R^8} + \cdots
\end{equation}
avec $K_j$ un 2-tenseur symétrique dont les coefficients sont des polynômes homogènes de degré $2j$, donc $K_2=K_{2,ijkl}x^ix^jdx^kdx^l$, etc. Le terme $\frac{K_2}{R^6}$, d'ordre 4, est au niveau du recollement de la même taille que $t^2h_2$ et doit donc être pris en compte. Pour cela, on modifie dans \cite[§~14]{Biq13} la métrique $g_0$ par un terme $t^2k_2$ défini sur $M_0$, qui est $L^2$ près de $\partial M_0$, et satisfait
\begin{equation}
  \label{eq:20}
  P_{g_0}k_2 = 0, \quad B_{g_0}(k_2) = 0, \qquad k_2 \sim \frac{K_2}{r^6} \text{ près de }p_0.
\end{equation}
On peut aussi résoudre le système analogue sur $k_3$, les termes non linéaires de Ricci n'interviennent pas car ils sont d'ordre plus grand ($O(R^{-10})$ pour $\frac{K_2}{R^6}$).

Le recollement de $th_t^{[2]}$ avec $g_0+t^2k_2$ sur l'orbifold produit une solution approchée de l'équation d'Einstein, que l'analyse développée dans \cite{Biq13} permet de déformer en une solution $g_t$ de l'équation
\begin{equation}
  \label{eq:21}
  \Ric(g_t)-\Lambda g_t = \sum_1^3 \lambda_i(t) o_{i,t}, \quad \lambda_i(t)=t\lambda_i+t^2\mu_i+O(t^{\frac52}).
\end{equation}
Ici l'infini conforme de $g_t$ ne varie pas et les tenseurs $o_{i,t}$ sont ceux considérés dans la section \ref{sec:non-degenerescence}, mais convenablement projetés sur le noyau de l'opérateur de Bianchi $B_{g_t}$ de sorte que l'équation (\ref{eq:21}) soit possible ; pour éviter d'alourdir les notations, nous utilisons encore le même symbole.

Toute la construction dépend de deux paramètres importants :
\begin{itemize}
\item un paramètre $\varphi$ de recollement de l'instanton gravitationnel avec l'orbifold $M_0$ : on peut appliquer avant recollement un élément de $SO_4$ ; la valeur de $\varphi$ pour $t=0$ est implicitement fixée par le choix d'identification de $T_{p_0}M_0$ avec $\Bbb{R}^4$ de sorte que $R_+^{g_0}(\omega_1)=0$ ; dans la suite, il nous suffira de prendre $\varphi\in Sp_1$, dont l'algèbre de Lie est $\Omega_+\Bbb{R}^4$ ;
\item l'infini conforme $[\gamma]$ sur le bord à l'infini $\partial M$.
\end{itemize}
Au besoin nous noterons cette dépendance par $g_t(\gamma)$, $\lambda_i(\gamma)$, etc.

La famille de métriques d'Einstein désingularisées est obtenue en variant les paramètres par rapport à $t$ : un choix adéquat de $\gamma(t)$ et $\varphi(t)$ permet d'obtenir une métrique $g_t(\varphi(t),\gamma(t))$ d'Einstein en tuant les obstructions présentes dans (\ref{eq:21}). Rappelons que la condition (\ref{eq:1}), écrite dans une base convenable, dit que les obstructions s'annulent au premier ordre : $\lambda_i=0$ ; en revanche il n'y aucune raison pour l'annulation des $\mu_i$. De \cite[§~12]{Biq13} résulte les faits suivants :
\begin{itemize}
\item si $\rk \bR_+^{g_0}(p_0)=2$, alors les coefficients $\lambda_2(t)$ et $\lambda_3(t)$ peuvent être annulés par un choix adéquat du paramètre de recollement $\varphi(t)$ ;
\item il existe une métrique conforme infinitésimale $\dot \gamma_1$ telle que
\begin{equation}
  \label{eq:22}
  \frac{\partial \lambda_1}{\partial \gamma} (\dot \gamma_1) = 1 ;
\end{equation}
aussi l'obstruction $\lambda_1(t)$ peut être tuée grâce par un choix de $\gamma(t)=\gamma_0+f(t)\dot \gamma_1$ satisfaisant
\begin{equation}
  \label{eq:23}
  f(t) = - t \mu_1 + O(t^{\frac32}).
\end{equation}
\end{itemize}
Au total, on obtient la désingularisation d'Einstein $g_t(\varphi(t),\gamma(t))$ voulue sur $M$.

Partons de $(M_0,g_0)$, Einstein asymptotiquement hyperbolique, non dégénérée, avec un point singulier de type $A_k$ en $p_0$. Il existe donc une désingularisation partielle $g_t=g_t(\varphi(t),\gamma(t))$ obtenue en recollant un instanton gravitationnel orbifold de rang 1, $Y$, avec un point singulier de type $A_{k-1}$ en un point $p_1$ placé sur l'unique courbe holomorphe $\Sigma\subset Y$. En outre, il est montré dans \cite[lemme 18]{Biq16} qu'en écrivant, grâce au choix d'une base de diagonalisation de $\bR_+^{g_0}$ en $p_0$,
$$ \bR_+^{g_0}(p_0) = \begin{pmatrix} 0 & & \\ & \Lambda_2 &  \\ & & \Lambda_3 \end{pmatrix} $$
on a
\begin{equation}
 R_+^{g_t}(p_1) = t R_+^{g_0}(p_0) + t^2 B + O(t^3),\label{eq:24}
\end{equation}
où $B$ est une matrice dont le coefficient $B_{11}(p_1)$ au point $p_1$ est donné en termes du germe de $g_0$ en $p_0$ par la formule
\begin{equation}\label{eq:25}
 B_{11}(p_1) = A(p_0) := -(k-1)\Lambda_2\Lambda_3  + \frac{k+1}{16} \big\langle (\nabla^2_{11}+\nabla^2_{22}-\nabla^2_{33}-\nabla^2_{44})R(p_0)\omega_1,\omega_1 \big\rangle.
\end{equation}
Notons que les termes avec des dérivées secondes ont un sens car la diagonalisation de l'action du groupe $A_k$ décompose l'espace tangent en $p_0$ en une somme $\Bbb{C}\oplus\Bbb{C}$. En revanche on peut échanger ces deux facteurs (ce qui correspond à changer la manière de recoller $Y$ à $M_0$), ce qui change le signe du second terme dans (\ref{eq:25}). En particulier, on déduit
\begin{equation}
\det R_+^{g_t}(p_1)=A(p_0) \Lambda_2\Lambda_3t^4 + O(t^5).\label{eq:26}
\end{equation}
Puisque $g_t$ est non dégénérée par le théorème \ref{theo:1}, on peut désingulariser $g_t$ au point $p_1$ pourvu que $\det R_+^{g_t}(p_1)=0$. Un changement d'ordre $t$ de l'infini conforme provoque une modification d'ordre $t$ de $\bR_+^{g_0}(p_0)$ et donc une modification de $\det R_+^{g_t}(p_1)$ par un terme d'ordre $t^5$, donc il est clair que le coefficient $A(p_0)$ est une obstruction à poursuivre la désingularisation. C'est la seule, car on montrera dans la section \ref{sec:germe-au-point} :
\begin{prop}\label{prop:germe-A2}
  Il existe une métrique conforme infinitésimale $\dot \gamma_2$ sur $\partial M$, telle que la perturbation $\dot g_2$ de $g_0(\gamma)$ dans la direction $\dot \gamma_2$ satisfasse au point $p_0$ :
  \begin{itemize}
  \item les termes d'ordre 2 de $\dot g_2$ sonts nuls, autrement dit $\dot g_2=O(r^4)$, en particulier $\frac{\partial \lambda_i}{\partial \gamma}(\dot \gamma_2)=0$ ;
  \item la dérivée de $\langle (\nabla^2_{11}+\nabla^2_{22}-\nabla^2_{33}-\nabla^2_{44})R(p_0)\omega_1,\omega_1 \rangle$ dans la direction $\dot g_2$ est égale à $1$, donc en particulier $\frac{\partial A(p_0)}{\partial \gamma}(\dot \gamma_2)\neq 0$.
  \end{itemize}
\end{prop}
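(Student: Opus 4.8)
The plan is to read \ref{prop:germe-A2} as a surjectivity statement for the map sending an infinitesimal change of conformal infinity to the jet at $p_0$ of the associated infinitesimal Einstein deformation. Write $\dot g(\dot\gamma)$ for the solution of the linearised Einstein equation in jauge de Bianchi, $P_{g_0}\dot g=0$ and $B_{g_0}\dot g=0$, having conformal infinity $\dot\gamma$; this exists and is unique because $g_0$ est non dégénérée, so that $P_{g_0}$ is an isomorphism on the weighted spaces. The second bullet of the statement is a single linear functional $\ell_4$ on the degree-$4$ Taylor coefficients of $\dot g$ at $p_0$, while the first bullet asks that the degree-$0$ and degree-$2$ coefficients, i.e. the $2$-jet $J_2\dot g$, vanish (whence $\frac{\partial\lambda_i}{\partial\gamma}(\dot\gamma_2)=0$, since $\lambda_i$ depends only on $\bR_+(H_1)$, which is degree $2$). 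Everything being linear in $\dot\gamma$, it suffices to produce one $\dot\gamma_2$ with $J_2\dot g(\dot\gamma_2)=0$ and $\ell_4(\dot g(\dot\gamma_2))\neq0$ and then rescale; equivalently, I must show that $\ell_4\circ\dot g$ does not vanish identically on $\ker(J_2\circ\dot g)$.

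The heart of the argument is a duality between jets at $p_0$ and Neumann data at $\partial M_0$, realised by singular solutions. Using again the invertibility of $P_{g_0}$, for each jet functional at $p_0$ I would construct a \emph{dual singular solution} $w$ of $P_{g_0}w=0$ on $M_0\setminus\{p_0\}$, in jauge de Bianchi, which is $L^2$ near $\partial M_0$ (hence has vanishing conformal infinity) and carries a prescribed singularity at $p_0$; concretely one truncates the formal singular solution near $p_0$ and inverts $P_{g_0}$ on the compactly supported error, exactly as the tensors $\bar o_i$ and $k_2$ are produced in \cite[§~13]{Biq13} and in (\ref{eq:20}). The indicial analysis of $P_{g_0}$ at $p_0$ fixes the correspondence of orders: the dual of a degree-$d$ regular coefficient is a singularity of strictly larger order as $d$ grows, the degree-$2$ duals being of order $r^{-4}$ (this is precisely the behaviour of the $\bar o_i$, whose asymptotics $\eta_i/R^6$ carry quadratic coefficients) and the degree-$4$ functional $\ell_4$ being represented by a solution $w_{\ell_4}$ with a genuine, strictly more singular $r^{-6}$ leading term. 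Green's formula, applied to $\langle P_{g_0}\dot g,w\rangle-\langle\dot g,P_{g_0}w\rangle=0$ integrated over $M_0\setminus B_\epsilon(p_0)$, then identifies each interior jet functional $\mu(\dot g(\dot\gamma))$ with the boundary pairing $c\,\langle \mathcal N w_\mu,\dot\gamma\rangle_{\partial M_0}$, where $\mathcal N w_\mu$ is the Neumann datum of $w_\mu$ at the conformal infinity and $c\neq0$.

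The conclusion is then by unique continuation. Suppose $\ell_4\circ\dot g$ vanished on $\ker(J_2\circ\dot g)$: then $\ell_4=L\circ J_2$ on the image for some linear $L$, so the functional $\mu:=\ell_4-L\circ J_2$ satisfies $\mu(\dot g(\dot\gamma))=0$ for every $\dot\gamma$. By the pairing above this forces $\mathcal N w_\mu=0$; as $w_\mu$ is $L^2$ at infinity it already has vanishing Dirichlet datum, so it vanishes to high order at $\partial M_0$, and unique continuation from the conformal infinity together with interior unique continuation forces $w_\mu\equiv0$ on $M_0\setminus\{p_0\}$. But the $r^{-6}$ part of the singularity of $w_\mu$ comes only from $\ell_4$ and cannot be cancelled by the strictly milder $r^{-4}$ singularities coming from $L\circ J_2$, so $w_\mu$ carries a nonzero leading singularity at $p_0$ — a contradiction. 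Hence $\ell_4\circ\dot g$ is nonzero on $\ker(J_2\circ\dot g)$, which produces $\dot\gamma_2$ and $\dot g_2=\dot g(\dot\gamma_2)$ satisfying the first bullet, and after normalisation $\ell_4(\dot g_2)=1$, the second.

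The main obstacle will be the construction and non triviality of the dual singular solutions, i.e. the indicial analysis of $P_{g_0}$ at the orbifold point: one must check that the formal singular solution representing $\ell_4$ exists, is compatible with the Bianchi gauge, and genuinely contributes an $r^{-6}$ term (not an accidentally milder one), so that the strict order comparison with the $J_2$-duals holds — this is the order-$4$ analogue of the order-$2$ analysis of the $o_i$ and their duals $\bar o_i$. A secondary technical point is the justification of unique continuation from the conformal infinity for the gauged operator $P_{g_0}$, and the careful bookkeeping of the boundary term at infinity in Green's formula (the $x^0$ versus $x^4$ indicial roots) ensuring $c\neq0$. The purely algebraic input — that $\ell_4$ is a nonzero functional on degree-$4$ harmonic-type jets, realisable by a formal solution with vanishing $2$-jet — is routine once the space of degree-$4$ solutions is identified.
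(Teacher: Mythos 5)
Your overall strategy coincides with the paper's: Proposition \ref{prop:germe-A2} is deduced from a surjectivity statement (Lemma \ref{lem:extension-sym2}) for the map sending boundary data at the conformal infinity to germs at $p_0$, and that surjectivity is obtained by dualizing, via the Green-type identity (\ref{eq:31}), the injectivity of the map $S$ which sends a prescribed singularity $\sigma/r^{2k+2}$ at $p_0$ to the leading asymptotic at infinity of the associated $L^2$ solution, injectivity being proved by unique continuation \cite{Maz91c}. However, the two points you set aside as ``secondary'' or ``routine'' are precisely where the substance of the paper's argument lies, and the first is stated incorrectly. Your step ``as $w_\mu$ is $L^2$ at infinity it already has vanishing Dirichlet datum, so it vanishes to high order at $\partial M_0$'' fails as written: the operator $P$ on $\Sym_0^2$ has \emph{three} pairs of indicial roots at the conformal infinity, $\delta_2^\pm=(0,3)$, $\delta_1^\pm=(-1,4)$ and $\delta_0^\pm$ with $\delta_0^+>4$, the latter two carried by the pure-gauge solutions $(\delta^*X)_0$ and $(\delta^*df)_0$. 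Vanishing of the coefficient of $x^{\delta_2^+}=x^3$ therefore does not force decay to infinite order, and the unique continuation argument cannot start. Ruling out the residual asymptotics at the weights $4$ and $\delta_0^+$ is the bulk of the proof of Lemma \ref{lem:extension-sym2}: it requires the parallel analysis of $B\delta^*$ on vector fields and of $\Delta-2\Lambda$ on functions (Lemmas \ref{lem:fonctions} and \ref{lem:champs}), the normalisations (\ref{eq:40}) and (\ref{eq:42}) to kill the weight-$4$ part via a second application of (\ref{eq:31}), and the Bianchi constraint $B\sigma=0$ on the germ at $p_0$ to exclude the weight-$\delta_0^+$ part of type $(\delta^*df)_0$.

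The second gap is the ``purely algebraic input'', which is not routine. After imposing $\Gamma$-invariance, harmonicity, $B\sigma=0$ and orthogonality to the infinitesimal diffeomorphisms $\delta^*\cF_{k+1}$, the space of achievable degree-$4$ germs collapses to $S_+^6S_-^2\oplus S_+^2S_-^6$ (formula (\ref{eq:43}) with $k=4$), a small subspace of $\cP_4\otimes\Sym_0^2\Bbb{R}^4$. One must then check by an explicit $SO(4)$-representation computation that the functional $\langle(\nabla^2_{11}+\nabla^2_{22}-\nabla^2_{33}-\nabla^2_{44})R(p_0)\omega_1,\omega_1\rangle$ does not vanish on this subspace; the paper does this by showing that the projection of $(e_1^2+e_2^2-e_3^2-e_4^2)\otimes\omega_1^2$ onto $S_+^6S_-^2$ is nonzero, using $e_1^2+e_2^2-e_3^2-e_4^2=\omega_1\circ(e_1e_2-e_3e_4)$. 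If this functional happened to vanish on $\cG_4^0$, your dual solution $w_{\ell_4}$ would carry no genuine $r^{-6}$ singularity and the contradiction closing your argument would evaporate. So the skeleton of your proof is the right one, but both flagged ``technical points'' must actually be carried out, and the first must be corrected to account for the full indicial structure of $P$ at infinity.
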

Il résulte de la proposition et des formules (\ref{eq:25}) (\ref{eq:26}) que si $A(p_0)=0$, alors il y a une solution $g_t(\varphi(t),\gamma(t))$ avec $\gamma(t)=\gamma_0+f_1(t)\dot \gamma_1+f_2(t)\dot \gamma_2$ des équations
\begin{equation}
  \label{eq:27}
  (\Ric-\Lambda)(g_t(\varphi(t),\gamma(t))) = 0 , \quad \det R_+^{g_t(\varphi(t),\gamma(t))}(p_1)=0.
\end{equation}
Autrement dit $g_t(\varphi(t),\gamma(t))$ est une métrique d'Einstein dont la singularité en $p_1$ est partiellement désingularisable. Dans le cas $A_2$, la singularité en $p_1$ est une singularité $A_1$, et la désingularisation est donc finie. On a ainsi montré la première partie du théorème \ref{th:2} :
\begin{theo}\label{th:A2}
  Soit $(M_0,g_0)$ une variété d'Einstein asymptotiquement hyperbolique, non dégénérée, avec une singularité de type $A_2$ au point $p_0$. Si $\rk \bR_+^{g_0}(p_0)=2$, ce qui implique (\ref{eq:1}), et si $A(p_0)=0$ où $A(p_0)$ est le coefficient défini par la formule (\ref{eq:25}) avec $k=2$, alors il existe une famille de désingularisations d'Einstein de $g_0$ sur une désingularisation topologique de $M_0$.
\end{theo}
Plus précisément, on obtient une famille à deux paramètres $(t_1,t_2)$, où $t_1$ est le paramètre pour la désingularisation partielle de $M_0$ avec une singularité résiduelle $A_1$, et $t_2$ le paramètre de la seconde désingularisation. Cela correspond au fait que $A_2$ est de rang 2.

L'application du théorème \ref{theo:1} implique que les désingularisations obtenues restent non dégénérées.

\section{Germe au point singulier et infini conforme}
\label{sec:germe-au-point}

Ici nous déterminons les germes de variation de $g_0$ en $p_0$ réalisables par une variation de l'infini conforme de $g_0$, en généralisant \cite[§~10--11]{Biq13} qui se limitait au cas des germes d'ordre 2. Une méthode plus intrinsèque est nécessaire pour traiter les germes d'ordre plus élevé ; incidemment nous comblons une légère lacune dans le traitement de \cite{Biq13}, relevée par Morteza et Viaclovsky \cite{MorVia}, mais sans incidence sur les résultats. La méthode ici est plus simple car elle ne fait appel qu'au théorème de continuation unique pour les opérateurs elliptiques de \cite{Maz91c} au lieu du théorème de continuation unique de \cite{Biq08} pour le problème d'Einstein, non elliptique à cause de l'action des difféomorphismes.

Pour traiter le cas de l'opérateur $P=\frac12\nabla^*\nabla-\rR$ agissant sur les sections de $\Sym_0^2(T^*M_0)$, on doit aussi considérer les opérateurs $B\delta^*$ agissant sur les sections de $TM$, et $\Delta-2\Lambda=\Delta+6$ agissant sur les fonctions. Le lien entre ces opérateurs provient des remarques suivantes :
\begin{itemize}
\item si $B\delta^*X=0$ alors $P\delta^*X=0$ et en particulier $P(\delta^*X)_0=0$, où $(\delta^*X)_0$ est la partie sans trace ;
\item de même, puisque
$$ B\delta^* = \frac12 (\nabla^*\nabla-\Lambda) = \frac12 (\Delta-2\Lambda), $$
où $\Delta=dd^*+d^*d$ est le laplacien de Hodge-De Rham sur les 1-formes (identifiées aux vecteurs par la métrique), une solution de $(\Delta-2\Lambda)f=0$ donne naissance à une solution $df$ de $B\delta^*df=0$, et donc à une solution $(\nabla df)_0$ de $P((\nabla df)_0)=0$.
\end{itemize}

Soit $L=L_{g_0}$ l'un quelconque des opérateurs ci-dessus, agissant sur les sections du fibré $E$ (ou, plus généralement, un laplacien géométrique de type $\nabla^*\nabla+\cR$, où $\cR$ est un terme de courbure). Notons $\cP_k$ l'espace des polynômes harmoniques homogènes de degré $k$ sur $\Bbb{R}^4$. Les termes principaux de $L$ en $p_0$ s'identifient à ceux pour la métrique euclidienne $\euc$, à savoir le laplacien scalaire sur chaque coordonnée de $s$. Le terme principal d'une solution de $Ls=0$ est donc à coefficients harmoniques pour $\euc$, donc donné, s'il est d'ordre $k$, par des éléments de $\cP_k$, autrement dit $s \sim \sigma$ avec $\sigma\in \cP_k\otimes E$ ; si au contraire $s$ diverge en $p_0$, son terme principal sera donné par un comportement « dual » de type $\frac \sigma{r^{2+2k}}$, toujours avec $\sigma \in \cP_k\otimes E$. Bien entendu, si le point est orbifold de groupe $\Gamma$, on doit se restreindre à l'espace $(\cP_k\otimes E)^\Gamma$ des germes d'ordre $k$ invariants sous $\Gamma$.

Au bord à l'infini $\partial M_0$, un tel laplacien géométrique a pour terme dominant $-(x\partial_x)^2+3x\partial_x+A$, où $A$ est un opérateur linéaire auto-adjoint, et l'écriture de l'opérateur se fait dans une trivialisation orthonormale du fibré à l'infini ; les valeurs propres $\lambda$ de $A$ permettent de décomposer à l'infini le fibré $E=\oplus E_\lambda$, et le comportement asymptotique est donné sur chaque composante par l'opérateur scalaire $-(x\partial_x)^2+3x\partial_x+\lambda$, dont les solutions sont $x^{\delta^\pm}$, où $\delta^\pm=\frac32\pm\sqrt{\frac94+\lambda}$ sont les poids critiques, voir \cite{Maz91} pour l'analyse de ces opérateurs.

La base de notre traitement est l'intégration par parties suivantes \cite[(92)]{Biq13} : supposons qu'on ait deux solutions $s_\pm$ et $Ls_\pm=0$, avec les comportements duaux suivants :
\begin{itemize}
\item au point $p_0$, on a $s_+ \sim \sigma_+$ et $s_- \sim \sigma_- r^{-2-2k}$, avec $\sigma_\pm \in \cP_k\otimes E$ ;
\item à l'infini, $s_\pm \sim x^{\delta^\mp} \tau_\pm$, où $\tau_\pm$ est une section de $E$ sur $\partial M_0$, et $\delta^\pm$ sont des poids critiques duaux ($\delta^++\delta^-=3$, $\delta^+>\delta^-$) de $L$ à l'infini.
\end{itemize}
Alors on a
\begin{equation}
  \label{eq:31}
  (\sigma_+,\sigma_-) = \frac{\delta^+-\delta^-}{2k+2} (\tau_+,\tau_-)
\end{equation}
où le premier produits scalaire est le produit scalaire standard de $\cP_k\otimes E$, et le second est le produit scalaire $L^2$ sur les sections de $E$ sur $\partial M_0$.

\subsection*{Le cas des fonctions} Nous n'avons pas directement besoin de ce cas, mais il permet d'expliquer les idées plus simplement, et nous nous référerons ensuite à la démonstration faite ici. Soit $\delta_0^\pm$ les deux poids critiques de $L=\Delta-2\Lambda=\Delta+6$ à l'infini. L'inversibilité de $L$ dans $L^2$ implique immédiatement qu'étant donné une fonction $\tau$ sur $\partial M_0$, il existe une unique fonction $s$, solution de l'équation
\begin{equation}
 Ls=0, \qquad s\sim x^{\delta_0^-} \tau \text{ à l'infini}.\label{eq:32}
\end{equation}
Le terme principal de $s$ en $p_0$ est alors un polynôme harmonique d'un certain degré $k$.
\begin{lemm}\label{lem:fonctions}
  Pour tout $k\geq 1$ et tout germe de fonction harmonique $\sigma\in \cP_k^\Gamma$ d'ordre $k$ en $p_0$, il existe une fonction $\tau$ sur $\partial M_0$ telle que la solution du système (\ref{eq:32}) ait pour terme principal $\sigma$ en $p_0$.
\end{lemm}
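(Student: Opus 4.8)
Le plan est de montrer que, pour chaque $k$, l'application linéaire $\Phi_k$ qui à une fonction $\tau$ sur $\partial M_0$ associe la composante harmonique de degré $k$ du germe en $p_0$ de la solution $s$ de (\ref{eq:32}) est surjective sur $\cP_k^\Gamma$. La surjectivité fournira un $\tau$ dont la solution a pour partie de degré $k$ le germe $\sigma$ voulu ; pour en faire le \emph{terme principal}, on appliquera le même argument simultanément aux degrés $0,\dots,k$ afin d'annuler les composantes de degré $<k$.

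D'abord, j'utiliserais la dualité fournie par la formule d'intégration par parties (\ref{eq:31}). À un germe singulier $\sigma_-\in\cP_k^\Gamma$ on associe une solution $s_-$ de $Ls_-=0$ sur $M_0\setminus\{p_0\}$, singulière en $p_0$ de terme dominant $\sigma_- r^{-2-2k}$ (qui est harmonique euclidien par transformation de Kelvin, puisque $2-4-2k=-2-2k$) et $L^2$ à l'infini ; l'existence d'une telle solution résulte de l'analyse à poids au point singulier de \cite{Biq13,Maz91}, en partant du modèle tronqué $\chi\,\sigma_- r^{-2-2k}$ et en corrigeant par un terme $L^2$. Notons $\tau_-$ le coefficient dominant $x^{\delta_0^+}$ de $s_-$ à l'infini. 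Comme $s_-$ est $L^2$ (son coefficient $x^{\delta_0^-}$ est donc nul) et $s=s_\tau$ régulière en $p_0$, la formule (\ref{eq:31}) donne, le terme de bord au petit sphère autour de $p_0$ ne retenant que les harmoniques de degré $k$,
\begin{equation*}
 (\Phi_k(\tau),\sigma_-) = \frac{\delta_0^+-\delta_0^-}{2k+2}\,(\tau,\tau_-),
\end{equation*}
avec coefficient non nul. Ainsi un germe $\rho\in\cP_k^\Gamma$ est orthogonal à l'image de $\Phi_k$ si et seulement si la solution singulière associée vérifie $\tau_-=0$.

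La difficulté principale, et le cœur de l'argument, est alors de montrer que $\tau_-=0$ force $\rho=0$. Si $\tau_-=0$, le coefficient dominant de la branche à décroissance rapide de $s_-$ s'annule ; joint à l'annulation de la branche lente (car $s_-\in L^2$), la récurrence indicielle sur le développement polyhomogène de $s_-$ à $\partial M_0$ entraîne que tous les coefficients s'annulent, donc $s_-=O(x^\infty)$. Le théorème de continuation unique pour les opérateurs elliptiques géométriques de \cite{Maz91c} donne alors $s_-\equiv 0$ au voisinage de l'infini conforme, puis sur tout le connexe $M_0\setminus\{p_0\}$ par continuation unique intérieure ; son germe en $p_0$ est donc nul, c'est-à-dire $\rho=0$. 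D'où la surjectivité de $\Phi_k$. C'est précisément ici que se concentre la subtilité : il faut d'une part garantir la structure polyhomogène et l'existence de $s_-$, d'autre part vérifier que l'annulation de la \emph{seule} donnée dominante $\tau_-$ propage à tout ordre (y compris l'absence de termes logarithmiques éventuels aux résonances).

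Enfin, pour obtenir $\sigma$ comme véritable terme principal, j'appliquerais le raisonnement à l'application conjointe $\tau\mapsto(\pi_0(\tau),\dots,\pi_k(\tau))$ à valeurs dans $\bigoplus_{j\le k}\cP_j^\Gamma$. Par la formule précédente appliquée degré par degré, son adjoint envoie une famille $(\rho_j)_{j\le k}$ sur la donnée de bord d'une combinaison convenable $\sum_j c_j\, s_-^{(j)}$ des solutions singulières correspondantes ; si cette donnée s'annule, la même continuation unique donne l'annulation de cette combinaison, dont le terme le plus singulier est $c_k\rho_k r^{-2-2k}$, d'où $\rho_k=0$ puis, par récurrence descendante, $\rho_j=0$ pour tout $j$. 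La surjectivité conjointe permet alors de choisir $\tau$ annulant $\pi_j$ pour $j<k$ tout en réalisant $\pi_k=\sigma$, ce qui fait bien de $\sigma$ le terme principal de $s$ en $p_0$ et achève la preuve.
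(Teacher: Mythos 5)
Votre démonstration est correcte et suit essentiellement la même stratégie que celle de l'article : construction des solutions singulières duales $s_-\sim \sigma_- r^{-2-2k}$ et $L^2$ à l'infini, dualité via la formule (\ref{eq:31}), injectivité de l'application $\sigma_-\mapsto\tau_-$ par continuation unique à l'infini \cite{Maz91c}, puis surjectivité de l'application duale. Votre traitement des composantes de degré $<k$ par surjectivité conjointe est simplement une reformulation de la restriction de l'article au sous-espace $E_k=(\oplus_{\ell<k}S(\cP_\ell))^\perp$.
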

\begin{proof}
  Étant donné $\sigma\in \cP_k^\Gamma$, posons $s_0=\frac \sigma{r^{2k+2}}$ qui est en $r^{-k-2}$, alors $Ls_0=O(r^{-k-2})$, donc ce terme d'erreur peut être compensé par un terme $s_1$ en $r^{-k}$ ; de proche en proche, on peut formellement corriger $s_0$ en un $\bar s$ défini près de $p_0$ tel que $\bar s\sim \frac \sigma{r^{2k+2}}$ et $L\bar s$ soit $L^2$ près de $p_0$. Utilisant l'inversibilité de $L$ sur $M_0$, on déduit finalement l'existence de $s$ définie sur $M_0$, telle que
\begin{equation}
Ls=0, \qquad s \sim \frac \sigma{r^{2k+2}} \text{ en }p_0, \qquad s \sim x^{\delta_0^+} \tau \text{ à l'infini}.\label{eq:33}
\end{equation}
(Le fait que $s$ soit $L^2$ à l'infini impose que $s=O(x^{\delta_0^+})$, l'autre poids critique $\delta_0^-$ ne peut pas apparaître). On définit ainsi un opérateur $S(\sigma)=\tau$, donc
$$ S: \cP_k \longrightarrow C^\infty(\partial M_0). $$
A priori, $S$ est mal défini, car la solution $s$ dans (\ref{eq:33}) est définie à l'ambiguïté près des solutions obtenues à partir de germes dans des $\cP_\ell$ pour $\ell<k$, mais cette ambiguité peut être levée en décidant que, pour le produit scalaire $L^2$,
$$ S(\cP_k) \perp \oplus_{\ell<k} S(\cP_\ell). $$

Le point essentiel de la démonstration consiste à montrer que $S$ est injective. En effet, si ce n'était pas le cas, on disposerait d'une solution $s$ du système (\ref{eq:33}) avec $\tau=0$. Le théorème de continuation unique dans cette situation \cite{Maz91c} implique qu'en réalité $s=0$, ce qui est impossible si $\sigma\neq 0$.

L'énoncé du lemme se déduit alors par passage au dual, grâce à (\ref{eq:31}). Étant donné $\tau\in C^\infty(\partial M_0)$, il existe une unique solution $s$ du problème de Dirichlet à l'infini (\ref{eq:32}), à savoir $Ls=0$ et $s \sim \tau x^{\delta_0^-}$ à l'infini. Notons $E_k$ l'espace des $\tau\in C^\infty(\partial M_0)$ tels que $s=O(r^k)$ en $p_0$. Par (\ref{eq:31}), on a
$$ E_k = \big( \oplus_{\ell<k} S(\cP_\ell) \big)^\perp . $$
En associant à $\tau\in E_k$ le terme d'ordre $k$ de la solution $s$ de (\ref{eq:32}), on obtient un opérateur linéaire
$$ T:E_k \longrightarrow \cP_k. $$
L'équation (\ref{eq:31}) s'écrit $(\sigma,T\tau) = c (S\sigma,\tau)$ ; comme $S$ est injective, $T$ est surjective, ce qui conclut la démonstration du lemme.
\end{proof}

\subsection*{Le cas des champs de vecteurs}
Le but est de trouver les germes de champs de vecteurs $X$ en $p_0$, satisfaisant
\begin{equation}
  \label{eq:34}
  B\delta^*X = 0,
\end{equation}
et réalisables par un champ de vecteurs global $X$ sur $M_0$, satisfaisant les mêmes équations, et convergeant sur $\partial M_0$ vers un champ de vecteur $X_\infty$ tangent à $\partial M_0$. La réponse est plus compliquée que pour les fonctions, car l'équation (\ref{eq:34}) sur $M_0$ implique que $\delta X=-\tr \delta^*X$ satisfait l'équation infinitésimale d'Einstein sur la partie à trace de la métrique, à savoir $(\frac12\Delta-\Lambda)\delta X=0$, ce qui compte tenu du comportement à l'infini (le poids critique pour $\Delta-2\Lambda$ satisfait $\delta_0^-<-1$ alors que $X=O(x^{-1})$ à l'infini) entraîne la contrainte
\begin{equation}
  \label{eq:35}
  \delta X = 0 .
\end{equation}
On ne peut donc s'attendre à obtenir que des germes satisfaisant (\ref{eq:35}). Notons $\cF_k$ l'espace correspondant de germes, à savoir
\begin{equation}
  \label{eq:36}
  \cF_k = (\Bbb{R}^4\otimes \cP_k)^\Gamma \cap \ker \delta.
\end{equation}

\begin{lemm}\label{lem:champs}
  Tout élément de $\cF_k$ peut être obtenu comme le terme principal en $p_0$ d'un champ de vecteurs $X$ sur $M_0$ satisfaisant $B\delta^*X=0$ et $X\sim \tau$ à l'infini, où $\tau\in C^\infty(\partial M_0,T\partial M_0)$.
\end{lemm}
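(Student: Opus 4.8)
The plan is to follow the scheme of the proof of Lemme \ref{lem:fonctions}, now with the geometric Laplacian $L = B\delta^* = \frac12(\Delta - 2\Lambda)$ acting on $1$-forms. Two features distinguish the vector case. On the one hand, $L$ is manifestly positive and hence $L^2$-invertible exactly as the scalar operator was: on a complete manifold $\Delta \ge 0$ on $1$-forms, and $-2\Lambda = 6$, so $L = \frac12(\Delta + 6) \ge 3$. On the other hand, and this is the one genuinely new point, the divergence is constrained: as recalled above in (\ref{eq:35}), any global solution $X$ with the prescribed behaviour at infinity satisfies $\delta X = 0$, so its principal germ at $p_0$ is Euclidean-divergence-free, hence lies in $\cF_k$. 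The whole content of the lemma is the reverse inclusion, that every element of $\cF_k$ actually occurs.

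First I would construct dual singular solutions. Given $\sigma \in \cF_k$, each component of $\sigma$ is harmonic homogeneous of degree $k$ on $\Bbb{R}^4$, so $\frac{\sigma}{r^{2k+2}}$ is componentwise harmonic and annihilated by the Euclidean part of $L$; correcting it order by order as in Lemme \ref{lem:fonctions} gives a formal solution $\bar X \sim \frac{\sigma}{r^{2k+2}}$ near $p_0$ with $L\bar X$ square-integrable there, and the $L^2$-invertibility of $L$ then produces a genuine $X_-$ on $M_0$ with
$$ LX_- = 0, \qquad X_- \sim \frac{\sigma}{r^{2k+2}} \text{ at } p_0, \qquad X_- \sim x^{\delta^+}\tau_- \text{ at infinity}, $$
the $L^2$ decay selecting the larger critical weight $\delta^+$. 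This defines a map $S : \cF_k \to C^\infty(\partial M_0, T\partial M_0)$, $S(\sigma) = \tau_-$, the ambiguity from lower-order germs being removed by imposing $S(\cF_k) \perp \oplus_{\ell < k} S(\cF_\ell)$ for the $L^2$ product on $\partial M_0$.

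Next I would prove $S$ injective: if $\tau_- = 0$ then $X_-$ decays faster than its indicial rate at infinity, so the elliptic unique continuation theorem \cite{Maz91c} forces $X_- = 0$, hence $\sigma = 0$. Finally I would dualize through the integration-by-parts identity (\ref{eq:31}). Writing $T\tau$ for the order-$k$ germ at $p_0$ of the solution of the Dirichlet problem $LX = 0$, $X \sim x^{\delta^-}\tau$ with $\tau$ a tangent field (the analogue of (\ref{eq:32})), whose germ lies in $\cF_k$ by the forced constraint $\delta X = 0$, the identity reads $(T\tau, \sigma) = c\,(\tau, S\sigma)$ for $\sigma \in \cF_k$ and $\tau \in E_k := (\oplus_{\ell < k} S(\cF_\ell))^\perp$, with $c = \frac{\delta^+-\delta^-}{2k+2} \ne 0$. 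Since $S(\cF_k) \subset E_k$ by the orthogonality convention, any $\sigma \in \cF_k$ orthogonal to the image of $T$ satisfies $(\tau, S\sigma) = 0$ for all $\tau \in E_k \ni S\sigma$, whence $S\sigma = 0$ and $\sigma = 0$; thus $T : E_k \to \cF_k$ is onto, which is the assertion of the lemma.

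The step that I expect to require real care, rather than transcription, is the asymptotic bookkeeping at infinity for the vector operator. One must check that the splitting of $T^*M_0$ near $\partial M_0$ into its tangential and $\frac{dx}{x}$ parts, together with Mazzeo's indicial computation \cite{Maz91}, arranges the critical weights $\delta^\pm$ so that the $L^2$ solution $X_-$ has leading coefficient a genuine tangent field and the Dirichlet problem with tangent data is well posed; it is here that the interplay with the constraint $\delta X = 0$ must be tracked, so as to be sure that the pairing (\ref{eq:31}) closes within the divergence-free subspace $\cF_k$ rather than leaking into gradient directions.
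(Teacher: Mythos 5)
Your overall scheme (dual singular solutions, the operator $S$, injectivity, then dualization through (\ref{eq:31})) is the same as the paper's, and you correctly isolate the divergence constraint as the new feature. But your injectivity argument has a genuine gap, and it is exactly at the point you defer to ``asymptotic bookkeeping'' at the end. You claim: if $\tau_-=0$ then $X_-$ decays faster than its indicial rate, so unique continuation gives $X_-=0$. This is false as stated, because $B\delta^*$ on vector fields has \emph{two} pairs of critical exponents at infinity, corresponding to the splitting of $TM_0$ at the boundary into $T\partial M_0$ and the normal bundle: the tangential pair is $(\delta_1^-,\delta_1^+)=(-1,4)$, while the normal pair is $\delta_0^\pm$ with $\delta_0^+>\delta_1^+$, the corresponding solutions being of gradient type $X=df$ with $(\Delta-2\Lambda)f=0$. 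The map $S$ records only the coefficient at the \emph{first} weight $\delta_1^+$; its vanishing does not push $X_-$ below all indicial roots, it merely means the expansion restarts at $\delta_0^+$ with a term asymptotic to $df$, $f\sim f_\infty x^{\delta_0^+}$. So Mazzeo's unique continuation theorem cannot be applied to $X_-$ directly.

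The paper's resolution requires an extra step that is absent from your proposal. One checks that when $\tau=0$ the full formal expansion of $s$ at infinity must agree with that of a gradient $df$, hence $ds=O(x^\infty)$; since $ds$ also satisfies $(\Delta-2\Lambda)ds=0$, unique continuation applies to $ds$ and gives $ds=0$ everywhere, so $s=df$ near $p_0$. One then computes that a singular gradient germ $\frac{\sigma}{r^{2k+2}}=df$ with $f\sim \phi r^{-2k}$, $\phi\in\cP_{k-1}^\Gamma$, forces $\sigma=r^2d\phi-2k\phi\, r\,dr$, whose divergence is $\delta\sigma=2k\phi$. It is only here that the hypothesis $\sigma\in\cF_k\subset\ker\delta$ is used: it kills $\phi$ and hence $\sigma$. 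Note that in your write-up the divergence-free condition is used only for the easy inclusion (that global solutions have divergence-free germs) and never in the surjectivity argument --- a sign that the argument as written cannot be complete, since without $\delta\sigma=0$ the statement is false (the gradient germs are genuinely not realizable). The rest of your dualization step is fine once injectivity is repaired.
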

On prendra garde que $X\sim \tau$ correspond dans nos conventions à un poids $-1$ par rapport à la métrique asymptotiquement hyperbolique $g_0$, puisque $|X|\sim x^{-1}$.
\begin{proof}
  La démonstration est similaire à celle du lemme \ref{lem:fonctions}, mais plus compliquée car l'opérateur $B\delta^*$ a deux paires d'exposants critiques à l'infini, correspondant à la décomposition de $TM_0$ au bord en le fibré tangent à $\partial M_0$ et le fibré normal. Les poids critiques correspondant à $T\partial M_0$ sont $(\delta_1^-,\delta_1^+)=(-1,4)$, alors que ceux correspondant au fibré normal sont $\delta_0^\pm$, avec $\delta_0^+>\delta_1^+$, car les solutions correspondantes sont de la forme $X=df$ avec $(\Delta-2\Lambda)f=0$.

  On commence donc par étudier le problème dual : partant de $\sigma\in \cF_k$, on peut comme dans le lemme \ref{lem:fonctions} fabriquer une solution globale $s$ sur $M_0$ satisfaisant le système équivalent à (\ref{eq:32}) pour l'opérateur $B\delta^*$, à savoir
  \begin{equation}
    \label{eq:37}
    B\delta^*s=0, \qquad s\sim\frac \sigma{r^{2k+2}} \text{ en }p_0, \qquad s \sim x^{\delta_1^+}\tau \text{ à l'infini.}
  \end{equation}
  Posons $S(\sigma)=\tau$, et analysons l'injectivité de $S$. Si $\tau=0$, alors le développement de $s$ va commencer au second poids critique $\delta_0^+$ avec un terme de type $df$ et $f\sim f_\infty x^{\delta_0^+}$. Il est facile de voir que le développement formel de $s$ à l'infini en les puissances de $x$ doit coïncider avec celui de $df$ tel que $(\Delta-2\Lambda)f=0$, ce qui impose que la différentielle extérieure de $s$ vu comme 1-forme s'annule à tout ordre à l'infini, $ds=O(x^\infty)$. Comme $ds$ satisfait aussi $(\Delta-2\Lambda)ds=0$, le théorème de continuation unique \cite{Maz91c} implique $ds=0$ partout. En particulier, près de $p_0$, on a $\frac \sigma{r^{2k+2}}=df$ avec $(\Delta-2\Lambda)f=0$, ce qui impose $f\sim \frac \phi{r^{2k}}$ avec $\phi\in\cP_{k-1}^\Gamma$ et
  $$ \sigma = r^2 d\phi - 2k \phi r dr. $$
  Dans ce cas on calcule $\delta\sigma = 2k\phi$ donc ces solutions sont exactement annulées par la condition de divergence nulle, $\delta\sigma=0$. On considère donc l'opérateur $S(\sigma)=\tau$, défini entre les espaces
  $$ S : \cF_k \longrightarrow \Gamma(T\partial M_0).$$
L'opérateur $S$ est bien défini modulo l'image des opérateurs $S$ sur $\cF_\ell$ pour $\ell<k$. L'injectivité de $S$ implique alors la surjectivité de l'énoncé par la même démonstration que dans le lemme \ref{lem:fonctions}.
\end{proof}

\subsection*{Le cas des 2-tenseurs}
Nous passons à l'équation $Ps=0$ pour $s$ une section de $\Sym_0^2(T^*M_0)$. Étant donné une métrique conforme infinitésimale $\tau$ sur $\partial M_0$, la non dégénérescence de $g_0$ implique qu'on peut résoudre le problème de Dirichlet à l'infini :
\begin{equation}
  \label{eq:38}
  Ps = 0, \qquad s \sim x^{-2}\tau \text{ à l'infini.}
\end{equation}
À nouveau on prendra garde que $x^{-2}\tau$ a une norme qui tend vers une constante à l'infini, donc correspond au poids $\delta_2^-=0$ (et le poids dual est $\delta_2^+=3$). Compte tenu des remarques au début de cette section, les poids critiques de $P$ à l'infini sont exactement les $\delta_0^\pm$, $\delta_1^\pm$ et $\delta_2^\pm$, les deux premiers correspondant à des solutions de type $(\delta^*df)_0$ ou $(\delta^*X)_0$. 

Comme dans le cas des champs de vecteurs, il y a une contrainte sur les solutions de (\ref{eq:38}), provenant de l'annulation du tenseur de Ricci par l'opérateur de Bianchi : on a $BP=-B\delta^*B$ et donc une solution $s$ satisfait $B\delta^*(Bs)=0$ qui implique $Bs=0$. Il est donc naturel de considérer l'espace des germes
\begin{equation}
  \label{eq:39}
  \cG _k = (\Sym_0^2(\Bbb{R}^4)\otimes\cP_k)^\Gamma \cap \ker(B).
\end{equation}
\begin{lemm}\label{lem:extension-sym2}
  Tout élément de $\cG_k$ est obtenu comme le terme principal en $p_0$ d'une solution du problème (\ref{eq:38}).
\end{lemm}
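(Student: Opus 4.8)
The plan is to follow exactly the template established in Lemmas \ref{lem:fonctions} and \ref{lem:champs}, which is the dual/injectivity method based on the integration by parts formula (\ref{eq:31}). First I would construct, for each germ $\sigma\in\cG_k$, a global solution of the \emph{dual} problem on $M_0$: starting from the divergent model $s_0 = \frac{\sigma}{r^{2k+2}}$ near $p_0$, I correct it formally order by order (the principal part of $P$ at $p_0$ being the Euclidean Laplacian, acting diagonally on the coefficients) into a local $\bar s \sim \frac{\sigma}{r^{2k+2}}$ with $P\bar s$ in $L^2$ near $p_0$; then, using the non-degeneracy of $g_0$ (invertibility of $P_{g_0}$ in $L^2$), I produce a global $s$ with
\begin{equation*}
 Ps = 0, \qquad s \sim \frac{\sigma}{r^{2k+2}} \text{ en } p_0, \qquad s \sim x^{\delta_2^+}\tau \text{ à l'infini},
\end{equation*}
and set $S(\sigma)=\tau$, defined modulo the images of the $S$ on $\cG_\ell$ for $\ell<k$ exactly as before.

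The heart of the argument is the injectivity of $S:\cG_k\to\Gamma(\Sym_0^2 T^*\partial M_0)$, and this is where I expect the main obstacle, because the 2-tensor case has \emph{three} pairs of critical weights $\delta_0^\pm,\delta_1^\pm,\delta_2^\pm$ rather than one or two. If $\tau=0$, the expansion of $s$ at infinity must begin at a deeper critical weight, and I must rule out that the surviving solution is a genuine obstruction rather than a gauge artifact. The key is the Bianchi identity $BP=-B\delta^*B$: any solution of $Ps=0$ satisfies $B\delta^*(Bs)=0$, hence $Bs=0$ by the same reasoning used just before (\ref{eq:39}) (the weight analysis forcing the vanishing). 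Thus $s$ stays in the gauge-fixed space $\ker B$. The remaining solutions that could obstruct injectivity are precisely those of the form $(\delta^*X)_0$ or $(\delta^*df)_0$ coming from the lower weights $\delta_1^\pm$ and $\delta_0^\pm$; but these are governed by Lemmas \ref{lem:champs} and \ref{lem:fonctions}. So I would argue that if $\tau=0$ then the expansion of $s$ at infinity matches to all orders that of such a gauge solution, apply the unique continuation theorem of \cite{Maz91c} to the elliptic operator $P$ (now legitimate, since $P$ is a geometric Laplacian, unlike the non-elliptic Einstein operator), and conclude that near $p_0$ the germ $\sigma$ is itself of gauge type $(\delta^*X)_0$ with $X$ a germ in $\cF_{k+1}$ or $df$ with $f$ harmonic. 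The computation of $B$ on such germs (as in the $\delta\sigma=2k\phi$ computation of Lemma \ref{lem:champs}) should show that the constraint $B\sigma=0$ defining $\cG_k$ precisely excludes these, so that $\sigma=0$.

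Once injectivity of $S$ is established, the lemma follows by the same duality passage as in the previous two cases: given $\tau$, the problem (\ref{eq:38}) has a unique solution, and associating to $\tau$ the order-$k$ principal part of $s$ at $p_0$ defines an operator $T:E_k\to\cG_k$ on the orthogonal complement $E_k=(\oplus_{\ell<k}S(\cG_\ell))^\perp$. The pairing (\ref{eq:31}) reads $(\sigma,T\tau)=c\,(S\sigma,\tau)$, so the injectivity of $S$ yields the surjectivity of $T$, which is exactly the assertion that every element of $\cG_k$ arises as the principal part at $p_0$ of a solution of the Dirichlet problem at infinity. The only genuinely delicate point, as noted, is the bookkeeping of the three weight pairs in the injectivity step and the verification that the Bianchi gauge constraint $\ker B$ in the definition of $\cG_k$ kills exactly the spurious lower-weight solutions; the rest is a routine transcription of the scalar and vector arguments to $\Sym_0^2$.
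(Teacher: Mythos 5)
Your overall architecture is the right one and matches the paper's (dual problem with a pole $\sigma/r^{2k+2}$ at $p_0$, operator $S(\sigma)=\tau$, injectivity of $S$, then duality via (\ref{eq:31})), and you correctly identify that the whole difficulty sits in the injectivity step with its three pairs of critical weights. But the injectivity argument you sketch has two genuine gaps. First, the claim that $Bs=0$ ``by the same reasoning used just before (\ref{eq:39})'' is unjustified for the dual solution: that reasoning kills $Bs$ because $Bs$ is an interior-regular, decaying solution of $B\delta^*(Bs)=0$, whereas your $s$ is singular at $p_0$ (like $r^{-k-2}$), so $Bs$ is a possibly nonzero singular solution. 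In fact the paper's final contradiction lives exactly in the regime $Bs=df\neq0$ near $p_0$, so assuming $Bs=0$ short-circuits the part of the argument that actually does the work for the $\delta_0^+$ (function-type) asymptotics; there, the correct move is to show $dBs=O(x^\infty)$ and apply unique continuation to $dBs$ (which solves $(\Delta-2\Lambda)dBs=0$), not to $P$ and $s$ directly, since $s$ itself does not vanish to infinite order at infinity when only $\tau$ vanishes.

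Second, and more seriously, your punchline ``the constraint $B\sigma=0$ defining $\cG_k$ precisely excludes these gauge germs'' is false for the vector-field type: a germ $\delta^*X$ with $X\in\cF_{k+1}$ satisfies $B\delta^*X=\frac12\nabla^*\nabla X=0$, so $\delta^*\cF_{k+1}\subset\cG_k$ — these germs are not excluded by the Bianchi condition, they are legitimate elements of $\cG_k$ that must themselves be realized (via Lemma \ref{lem:champs}). Consequently the $\delta_1^+$ asymptotics $s\sim\delta^*(x^5X_\infty)$ cannot be ruled out by gauge considerations at $p_0$. The paper handles this by splitting $\cG_k=\delta^*\cF_{k+1}\oplus\cG_k^0$ with $\cG_k^0=\cG_k\cap(\delta^*\cF_{k+1})^\perp$, treating the first summand by Lemma \ref{lem:champs}, and, for $\sigma\in\cG_k^0$ with $\tau=0$, pairing $s$ against $\delta^*Y$ where $Y$ is the global solution of $B\delta^*Y=0$ with $Y\sim X_\infty$ at infinity: the orthogonality (\ref{eq:42}) forces $\delta^*Y\sim\sigma'\in\delta^*\cF_{k+1}$ at $p_0$, and (\ref{eq:31}) then gives $0=(\sigma,\sigma')=\mathrm{cst.}\,\|X_\infty\|^2$, hence $X_\infty=0$. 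One must also still check that $S(\cG_k^0)\cap S(\delta^*\cF_{k+1})=0$ to get injectivity on all of $\cG_k$, which the paper does with a further application of (\ref{eq:31}). None of these three steps appears in your proposal, and without them the injectivity of $S$ — hence the surjectivity you want — is not established.
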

\begin{proof}
  Certains éléments de $\cG_k$ sont déjà connus pour être obtenus comme terme principal en $p_0$ d'une solution de (\ref{eq:38}) : en effet, par le lemme \ref{lem:champs}, les éléments du type $\delta^*X$ sont obtenus globalement comme $s=\delta^*X$ avec $X$ satisfaisant $B\delta^*X=0$ et $X\sim X_\infty$ sur $\partial M_0$, où $X_\infty\in \Gamma(T\partial M_0)$ ; ce qui implique que $s \sim s_\infty$, où $s_\infty$ est l'action infinitésimale de $X$ sur l'infini conforme $[\gamma_0]$. Il est donc naturel de se restreindre à
  \begin{equation}
    \label{eq:40}
    \cG_k^0 = \cG_k \cap (\delta^* \cF_{k+1})^\perp.
  \end{equation}
  Partons de $\sigma\in\cG_k$, alors on peut à nouveau construire à partir de $\frac \sigma{r^{2k+2}}$ un développement formel en $p_0$ pour une solution de $Ps=0$, puis, $P$ étant inversible dans $L^2$ par non dégénérescence de $g_0$, obtenir une solution globale $s$ du système
  $$ Ps = 0, \qquad s \sim \sigma \text{ en }p_0, \qquad s \sim x \tau \text{ à l'infini,} $$
  où $\tau$ est un 2-tenseur symétrique sans trace sur $\partial M_0$ (donc $x\tau$ correspond au poids $\delta_2^+=3$).

  Le but est de définir par $S(\sigma)=\tau$ un opérateur injectif
  \begin{equation}
 S : \cG_k \longrightarrow \Gamma(\Sym_0^2 T^*\partial M_0).\label{eq:41}
\end{equation}
  Il y a une ambiguité sur $\tau$, provenant de la possibilité d'ajouter à $s$ une solution provenant d'un élément de $\cG_\ell$ pour $\ell<k$, donc nous pouvons poser que
  \begin{equation}
   S(\cG_k) \perp \oplus_{\ell<k} S(\cG_\ell).\label{eq:42}
 \end{equation}

 Par la démonstration du lemme \ref{lem:champs}, on sait déjà que $S$ est injective sur $\delta^*\cF_{k+1}$. Prenons à présent $\sigma\in\cG_k^0$, et supposons $S(\sigma)=\tau=0$, alors l'asymptotique de $s$ doit être donnée par le poids suivant $\delta_0^+$, à savoir $\delta_1^+=4$, donc
  $$ s \sim \delta^* X, \qquad X = x^5 X_\infty, X_\infty\in \Gamma(T\partial M_0). $$
(Le passage de la valeur asymptotique de $s$ à $X_\infty$ est algébrique). Or on peut trouver un champ de vecteurs global $Y$ sur $M_0$, solution du problème
$$ B\delta^*Y=0, \qquad Y\sim X_\infty \text{ à l'infini.} $$
Par (\ref{eq:31}) et (\ref{eq:42}), on voit que le terme principal de $Y$ en $p_0$ doit être orthogonal à tous les $\cF_\ell$ pour $\ell\leqslant k$, d'où il résulte que $Y=O(r^{k+1})$ et $\delta^*Y=O(r^k)$ en $p_0$, donc $\delta^*Y \sim \sigma'\in \delta^*\cF_{k+1}$. Appliquant à nouveau (\ref{eq:31}), on obtient
$$ 0=(\sigma,\sigma') = \mathrm{cst.} \|X_\infty\|^2 , $$
d'où résulte $X_\infty=0$. L'asymptotique de $s$ est donc donnée par le poids suivant (et dernier), $\delta_2^+$, ce qui signifie
$$ s \sim (\delta^*df)_0, \qquad f = x^{\delta_2^+}f_\infty, f_\infty\in C^\infty(\partial M_0). $$
La fonction $f$ est une solution asymptotique de $(\Delta-2\Lambda)f=0$, et de $B\delta^*df=0$ on déduit $B(\delta^*df)_0=-\frac14 d\Delta f=-\frac \Lambda2 df$. Plus précisément, $Bs$ possède à l'infini le même développement formel que $df$, avec $f$ solution de $(\Delta-2\Lambda)f=0$ et $f\sim x^{\delta_2^+}f_\infty$. Comme dans la démonstration du lemme \ref{lem:champs}, on déduit que $dBs=O(x^\infty)$ et donc il faut que $dBs=0$ partout. Aussi il existe près de l'infini une fonction $f$ telle que $Bs=df$, $(\Delta-2\Lambda)f=0$ et $s=(\delta^*df)_0$. Si $H^1(M_0,\Bbb{R})=0$, on peut étendre $f$ globalement, mais même si ce n'est pas le cas, on peut l'étendre analytiquement le long de chemins allant jusqu'à $p_0$, et il en résulte que près de $p_0$, il existe une fonction $f$ telle que $s=(\delta^*df)_0$ et $Bs=df$. Mais cela est contradictoire avec le fait que le terme principal $\sigma$ de $s$ en $p_0$ satisfait $B\sigma=0$.

La démonstration de l'injectivité de $S$ s'achève en remarquant que, par le lemme \ref{lem:champs}, chaque élément $\xi\in \cF_{k+1}$ est induit par un champ de vecteurs $X$ sur $M_0$ tel que $B\delta^*X=0$ et $X\sim X_\infty\in \Gamma(\partial M_0)$ ; on peut faire un choix tel que l'application $\xi\mapsto X$ soit linéaire. Notons $F(\xi)\in \Gamma(\Sym_0^2 T^*\partial M_0)$ l'action infinitésimale de $X_\infty$ sur $[\gamma_0]$, alors, par (\ref{eq:31}), on a $(F(\xi),S(\xi'))=\mathrm{cst.} (\delta^*\xi,\delta^*\xi')$ alors que $(F(\xi),S(\sigma))=0$ si $\sigma\in\cG_k^0$. Il en résulte que $S(\cG_k^0)\cap S(\delta^*\cF_{k+1})=0$. La démonstration du lemme se termine alors comme dans le lemme \ref{lem:fonctions}.
\end{proof}

\subsection*{Démonstration de la proposition \ref{prop:germe-A2}}
C'est une application simple du lemme \ref{lem:extension-sym2}. Un germe harmonique homogène d'ordre 4 peut être induit à partir du bord à l'infini, à condition d'être dans le noyau de l'opérateur de Bianchi. Mais tout germe peut être modifié par un difféomorphisme infinitésimal de sorte d'être en jauge de Bianchi. Par conséquent, sans se préoccuper de la condition de jauge, il suffit de trouver un germe harmonique d'ordre 4 en $p_0$ qui modifie non trivialement $\langle(\nabla^2_{11}+\nabla^2_{22}-\nabla^2_{33}-\nabla^2_{44})R(p_0)\omega_1,\omega_1\rangle$. On peut le trouver en utilisant la théorie des représentations de $SO(4)$ : les représentations irréductibles s'écrivent $S_+^kS_-^\ell$ à partir des deux représentations spinorielles fondamentales $S_\pm$, où $S_\pm^k$ désigne le produit symétrique. Ainsi $\Bbb{R}^4=S_+S_-$, $\Omega_\pm=S_\pm^2$, $\Sym_0^2\Bbb{R}^4=S_+^2S_-^2$, etc. L'espace des polynômes harmoniques de degré $k$ est $\cP_k=S_+^kS_-^k$. Alors
$$ \cP_k \otimes \Sym_0^2\Bbb{R}^4 = S_+^k S_-^k \otimes S_+^2 S_-^2 = (S_+^{k+2}\oplus S_+^k\oplus S_+^{k-2})\otimes(S_-^{k+2}\oplus S_-^k\oplus S_-^{k-2}). $$
L'opérateur de Bianchi sera alors à valeurs dans
$$ \cP_{k-1} \otimes \Bbb{R}^4 = S_+^{k-1}S_-^{k-1} \otimes S_+S_- = (S_+^k\oplus S_+^{k-2})\otimes(S_-^k\oplus S_-^{k-2}), $$
tandis que les difféomorphismes infinitésimaux harmoniques sont dans l'espace
$$ \cP_{k+1} \otimes \Bbb{R}^4 = S_+^{k+1}S_-^{k+1} \otimes S_+S_- = (S_+^{k+2}\oplus S_+^k)\otimes(S_-^{k+2}\oplus S_-^k). $$
Au total, si on prend les germes harmoniques dans le noyau de $B$, et orthogonaux aux difféomorphismes infinitésimaux (qui ne modifient pas la courbure), on obtient une description de l'espace noté $\cG_k^0$ plus haut comme
\begin{equation}
  \label{eq:43}
  \cG_k^0 = S_+^{k+2}S_-^{k-2} \oplus S_+^{k-2} S_-^{k+2}.
\end{equation}
Par exemple, pour $k=2$, on obtient $S_+^4\oplus S_-^4$ qui est exactement l'espace des valeurs du tenseur de courbure en $p_0$ si $g_0$ est Einstein, et le lemme \ref{lem:extension-sym2} dit que toute modification de $R(p_0)$ peut être induite par une modification de l'infini conforme, ce qui est le résultat utilisé dans \cite{Biq13}. Plus précisément, $S_\pm^4$ correspond au demi-tenseur de Weyl $W_\pm$.

Pour $k=4$ qui est le but de la proposition \ref{prop:germe-A2}, la flexibilité sur le 2-jet de la courbure est donné par $\cG_2^0=S_+^6S_-^2\oplus S_+^2S_-^6$. La première composante correspond à un espace de dérivées secondes de $W_+$, puisque $W_+$ est une section de $S_+^4$ donc $\nabla^2W_+(p_0) \in (S_+S_-)^2S_+^4 \supset S_+^6S_-^2$. Le terme voulu, $\langle(\nabla^2_{11}+\nabla^2_{22}-\nabla^2_{33}-\nabla^2_{44})R(p_0)\omega_1,\omega_1\rangle$, s'interprète simplement : $e_1^2+e_2^2-e_3^2-e_4^2$ est élément de $S_+^2S_-^2\subset (S_+S_-)^2$, et $\omega_1$ est un élément de $S_+^2$ donc $\omega_1\otimes\omega_1\in (S_+^2)^2\supset S_+^4$, donc un élément non nul de $S_+^6S_-^2$ induisant la courbure voulue est obtenu en projetant l'élément $(e_1^2+e_2^2-e_3^2-e_4^2)\otimes\omega_1^2 \in S_+^2S_-^2S_+^4$ dans $S_+^6S_-^2$. Le résultat est non nul car $e_1^2+e_2^2-e_3^2-e_4^2=(e_1e_2+e_3e_4)\circ(e_1e_2-e_3e_4)=\omega_1\circ(e_1e_2-e_3e_4)$, où $e_1e_2-e_3e_4\in S_-^2$, et la projection de $\omega_1^3$ dans le produit symétrique $S_+^6$ est non nulle. \qed

\section{Autres singularités}
\label{sec:autres-singularites}

\subsection*{Quotient fini de singularités fuchsiennes}
Nous examinons ici le cas de singularités, quotients finis de singularités fuchsiennes, voir la référence \cite{Suv12} ; pour la désingularisation dans le cadre kählérien, voir \cite{BiqRol15}.
Nous ne traitons que le cas le plus simple, à savoir le cas du groupe cyclique $\Bbb{Z}_4$ inclus dans $U(2)$ mais pas dans $SU(2)$. L'espace ALE Ricci plat modèle pour la désingularisation est $T^*\Bbb{R}P^2$, le quotient de l'espace de Eguchi-Hanson $T^*\Bbb{C}P^1$ par une involution.

Supposons donc $(M_0,g_0)$ une variété d'Einstein asymptotiquement hyperbolique, avec une singularité de ce type en $p_0$. Prenons en $p_0$ des coordonnées $(z^1=x^1+ix^2,z^2=x^3+ix^4)$, de sorte que l'action de $\Bbb{Z}_4$ soit engendrée par $\sigma(z^1,z^2)= (-\overline{z^2},\overline{z^1})$. Alors l'action de $\Bbb{Z}_4$ sur $(\Omega_+)_{p_0}$ est non triviale, puisque $\sigma$ agit avec valeurs propres $-1$ sur $\langle\omega_1,\omega_3\rangle$ et $+1$ sur $\omega_2$. Par conséquent, la courbure $R_+^{g_0}(p_0)$ est nécessairement de la forme
\begin{equation}
 R_+^{g_0}(p_0) = \begin{pmatrix} R_{11} & 0 & R_{13} \\ 0 & 0 & 0 \\ R_{31} & 0 & R_{33} \end{pmatrix}.\label{eq:44}
\end{equation}
Le recollement de \cite{Biq13} est réalisé en faisant coïncider à l'infini la structure kählérienne $\omega_1$ correspondant à la structure complexe $T^*\Bbb{C}P^1$ avec un $\omega_1\in (\Omega_+)_{p_0}$ tel que $R_+^{g_0}(p_0)\omega_1=0$. Nous voulons faire ici la même chose, de manière invariante sous l'action de $\Bbb{Z}_2$. Or l'action de $\Bbb{Z}_2$ sur $\omega_1$ dans $T^*\Bbb{C}P^1$ est par $-1$, donc nous avons besoin d'une forme $\omega\in \ker R_+^{g_0}(p_0)\cap \ker(\sigma+1)$. En particulier on ne peut pas choisir $\omega_2$. Les résultats de \cite{Biq13} s'étendent immédiatement :
\begin{prop}\label{prop:Z4}
  Soit $(M_0,g_0)$ une variété d'Einstein asymptotiquement hyperbolique, non dégénérée, avec un point orbifold $p_0$ de type $\frac14(1,1)$, et le groupe local $\Bbb{Z}_4$ est engendré par $\sigma$. Alors on peut désingulariser $(M_0,g_0)$, pourvu qu'existe $\omega\in (\Omega_+)_{p_0}$ telle que
  \begin{enumerate}
  \item $\omega$ est dans l'espace propre de $\sigma$ pour la valeur propre $-1$ ;
  \item $\bR_+^{g_0}(p_0)\omega=0$.
  \end{enumerate}
\end{prop}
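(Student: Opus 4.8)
The statement is essentially a transposition of the main desingularization construction of \cite{Biq13} (the $A_1$ case, using Eguchi--Hanson) to the quotient setting $T^*\Bbb{R}P^2 = T^*\Bbb{C}P^1/\Bbb{Z}_2$, so the natural strategy is to reduce the proof to the $\Bbb{Z}_2$-invariant part of the entire machinery already set up in the excerpt, rather than to redo any analysis. First I would fix, as the statement allows, a form $\omega\in(\Omega_+)_{p_0}$ satisfying (i) $\sigma\omega=-\omega$ and (ii) $\bR_+^{g_0}(p_0)\omega=0$, and use it to identify $T_{p_0}M_0$ with $\Bbb{R}^4$ so that $\omega$ plays the role of $\omega_1$ in the recollement; crucially, the gluing requires the Kähler form of $T^*\Bbb{C}P^1$ to match an element of $\ker\bR_+^{g_0}(p_0)$, and since $\Bbb{Z}_2$ acts by $-1$ on the $\omega_1$-direction of Eguchi--Hanson, the matching form must also lie in the $(-1)$-eigenspace of $\sigma$ — this is exactly the content of conditions (i)--(ii) and is the only new input.

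Next I would verify that the Kronheimer ALE space used here, $Y=T^*\Bbb{R}P^2$, is still a rank-$1$ orbifold ($b_2^{orb}(Y)=1$): indeed $T^*\Bbb{C}P^1$ has $b_2=1$ generated by a single antiselfdual $L^2$ harmonic form $\Omega$ (Poincaré dual to $2\pi\Sigma$), and passing to the $\Bbb{Z}_2$-quotient preserves this one-dimensionality because $\Omega$ is $\sigma$-invariant (the involution fixes the homology class of the central curve up to the volume normalization \eqref{eq:6}). Once rank $1$ is confirmed, the three generators $o_i=\omega_i\circ\Omega$ of $\ker_{L^2}P_0$, and the entire formal development $h_t^{[n]}$ of \eqref{eq:3}--\eqref{eq:8}, descend to their $\sigma$-invariant parts on $T^*\Bbb{R}P^2$; the key point is that $\sigma$ acts by $-1$ on $\langle\omega_1,\omega_3\rangle$ and $+1$ on $\omega_2$, so the relevant gluing data survives the quotient precisely in the sector selected by (i).

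With the geometric setup in place, the proof proceeds exactly as in \cite{Biq13} for $A_1$: one constructs the approximate metric $g_t$ by recollement of $g_0$ on $M^t$ with $th_t^{[1]}$ on $Y^t$, controls the gluing error (the $O(r^4)$ mismatch on the annulus), and deforms $g_t$ to a genuine Einstein metric by the weighted-Hölder analysis in the spaces $C^{k,\alpha}_{\delta_0,\delta_\infty;t}$. The obstruction to killing the residual terms $\sum\lambda_i o_{i,t}$ reduces, as in the introduction, to making $\bR_+^{g_0}(p_0)$ degenerate in the direction $\omega$; but condition (ii) is exactly this degeneracy, written invariantly under $\sigma$. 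Because all objects are taken $\Bbb{Z}_2$-invariant throughout, the fixed-point set $\Bbb{R}P^2\subset Y$ introduces no extra cokernel, and the inversion argument of \cite[§~9]{Biq13} goes through verbatim on the invariant function spaces.

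\emph{Main obstacle.} The principal subtlety I would expect is not analytic but representation-theoretic: one must check that the $\Bbb{Z}_2$-invariance imposed on $Y$, on the harmonic form $\Omega$, and on the gluing parameter is genuinely compatible with the nonvanishing of the relevant obstruction derivative — that is, that after restricting to the $(-1)$-eigenspace for $\sigma$ there still remains enough freedom (a nonzero $\lambda'$ in the analogue of \eqref{eq:9}) to run the implicit-function argument. Since $\sigma$ acts nontrivially on $(\Omega_+)_{p_0}$, the selfdual curvature operator is constrained to the block form \eqref{eq:44}, and one must confirm that the surviving $2\times 2$ block $\begin{pmatrix} R_{11} & R_{13} \\ R_{31} & R_{33}\end{pmatrix}$ carries the degenerate direction $\omega$ in its kernel while remaining of rank $1$ there; granting the existence hypothesis on $\omega$, this compatibility is automatic, which is why the authors can assert that \emph{«les résultats de \cite{Biq13} s'étendent immédiatement»}.
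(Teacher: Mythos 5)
Your overall strategy coincides with the paper's: the paper gives no separate proof of this proposition, relying entirely on the preceding discussion that the gluing must be performed $\Bbb{Z}_2$-equivariantly, so that the Kähler form of $T^*\Bbb{C}P^1$ must be matched with a form $\omega$ lying both in $\ker\bR_+^{g_0}(p_0)$ and in the $(-1)$-eigenspace of $\sigma$, after which the results of \cite{Biq13} extend immediately. Your identification of conditions (i)--(ii) as the only genuinely new input is exactly the intended point.

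There is, however, one concrete error in your second paragraph. The involution descending to $T^*\Bbb{C}P^1$ from $\sigma(z^1,z^2)=(-\overline{z^2},\overline{z^1})$ is anti-holomorphic for $J_1$ and restricts to the central curve $\Sigma\simeq\Bbb{C}P^1$ as the antipodal map, which reverses its orientation; hence $\sigma^*\Omega=-\Omega$, not $+\Omega$, and $b_2(T^*\Bbb{R}P^2)=0$ (consistent with $H^2(\Bbb{R}P^2;\Bbb{R})=0$), so the quotient is not of rank $1$ in the sense of $b_2^{orb}=1$. What actually survives the quotient is not $H^2_{L^2}$ itself but the $\sigma$-invariant part of $\Omega_+\otimes\langle\Omega\rangle\simeq\ker_{L^2}P_0$: since $\sigma$ acts by $-1$ on $\omega_1,\omega_3$, by $+1$ on $\omega_2$, and by $-1$ on $\Omega$, the invariant obstruction space is the \emph{two}-dimensional span of $o_1=\omega_1\circ\Omega$ and $o_3=\omega_3\circ\Omega$, exactly as the paper states right after the proposition. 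Had your premise been correct, the surviving obstruction would have been $o_2$ alone, i.e.\ the opposite sector, and you would misidentify which coefficients $\lambda_i$ must be killed. With the correct count, conditions (i)--(ii) combined with the block form (\ref{eq:44}) give, after a rotation in the $(\omega_1,\omega_3)$-plane, the normal form (\ref{eq:45}), so that $\lambda_1$ and $\lambda_3$ both vanish by (\ref{eq:9}); no rank hypothesis on the remaining block is needed for the desingularization itself (your final paragraph worries about this unnecessarily), and indeed the paper observes afterwards that non-degeneracy of the resulting metrics is precisely what is \emph{not} guaranteed here, because $\bR_+^{g_0}(p_0)-\Lambda$ acquires a kernel on $\langle\omega_1,\omega_3\rangle$.
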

Quitte à effectuer une rotation sur $(\omega_1,\omega_3)$, on peut toujours supposer que $\omega=\omega_1$ dans (\ref{eq:44}), et $R_+^{g_0}(p_0)$ doit donc avoir la forme
\begin{equation}
  \label{eq:45}
 R_+^{g_0}(p_0) = \begin{pmatrix} 0 & &  \\ & 0 & \\ & & R_{33} \end{pmatrix}.  
\end{equation}
Les obstructions qui subsistent sur $T^*\Bbb{R}P^2$ sont $o_1$ et $o_3$ ; par conséquent il n'est pas clair a priori que les métriques désingularisées soient non dégénérées, car l'opérateur $\bR_+^{g_0}(p_0)-\Lambda$ de la proposition \ref{prop:L1} admet un noyau sur $\langle\omega_1,\omega_3\rangle$.

Les autres cas sont aussi des quotients finis d'instantons gravitationnels. Il est difficile de les traiter par la méthode de cet article à cause de la dégénérescence de la courbure (\ref{eq:45}), qui empêche l'itération des désingularisations.


\subsection*{Plusieurs points singuliers}
Considérons à présent une variété d'Einstein asymptotiquement hyperbolique $(M_0,g_0)$, non dégénérée, avec plusieurs points orbifolds $p_i$ pour les singularités qu'on sait désingulariser, à savoir $A_1$, $A_2$ et $\Bbb{Z}_4\nsubset SU(2)$ ci-dessus. Supposons que les obstructions à la désingularisation soient nulles. Une étape de la méthode de désingularisation où pourraient interagir les différents points est la production de variations de l'infini conforme $[\gamma_0]$ induisant tous les germes nécessaires aux points singuliers ; dans le cas de plusieurs points, il faut pouvoir induire les germes adéquats au point $p_i$ sans modifier les germes aux autres points $p_j$ pour $j\neq i$. Or les raisonnements de la section \ref{sec:germe-au-point} s'étendent immédiatement au cas de plusieurs points : l'étape cruciale est l'injectivité des opérateurs notés $S$, mais le fait qu'il y ait un ou plusieurs points ne change rien à la démonstration. On en déduit assez rapidement :
\begin{prop}\label{prop:plusieurs-points}
  Supposons $(M_0,g_0)$ asymptotiquement hyperbolique, non dégénérée, avec plusieurs points orbifolds $p_i$ de type $A_1$, $A_2$ et au plus un point de type $\Bbb{Z}_4\nsubset SU(2)$ pour chacun desquels les obstructions à la désingularisation s'annulent. Pour les points de type $A_1$ et $A_2$ on demande en outre qu'ils soient non dégénérés au sens où $\rk \bR_+^{g_0}(p_i)=2$. Alors il existe une désingularisation topologique $M$ et des métriques d'Einstein lisses $g_t$ sur $M$, asymptotiquement hyperboliques, qui désingularisent $g_0$.
\end{prop}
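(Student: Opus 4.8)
The plan is to reduce the multi-point case to the single-point constructions already carried out for each isolated singularity type, the only genuinely new issue being the interaction between the singular points through the infinite-dimensional parameter that is the conformal infinity. First I would set up, at each singular point $p_i$, exactly the same local gluing data as in the isolated case: the rank-1 ALE model $Y_i$ adapted to the type of $p_i$ (Eguchi-Hanson for $A_1$, the rank-1 $A_{k-1}$-space for $A_k$, or $T^*\Bbb{R}P^2$ for $\Bbb{Z}_4$), the formal development $h_t^{[2]}$ at each point, and the corrector terms $k_2$, $k_3$ solving \eqref{eq:20} localized near $p_i$. Since these constructions take place in disjoint neighborhoods of the $p_i$, they do not interfere with one another, and the glued approximate metric $g_t$ is defined on the simultaneous topological desingularization $M$ obtained by replacing each $p_i$ by the corresponding $Y_i$. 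The weighted Hölder analysis of \cite{Biq13} applies verbatim with weights $\delta_0$ at each singular point and $\delta_\infty$ at infinity, because the parametrix is built by patching the local inverses near each $p_i$ with the global inverse $P_{g_0}^{-1}$ coming from non-degeneracy of $g_0$, and the error terms remain supported near the individual points.

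The heart of the argument is then the mechanism for killing the obstructions. At each point $p_i$ the obstructions $\lambda_j^{(i)}(t)$ live in the three-dimensional space $\langle o_j^{(i)}\rangle$, and for $A_1$, $A_2$ and $\Bbb{Z}_4$ we know by hypothesis that the leading coefficients vanish ($\rk\bR_+^{g_0}(p_i)=2$ forces \eqref{eq:1}, and the higher obstructions like $A(p_0)$ are assumed to vanish). The obstructions split into two kinds: those that can be annihilated by a local gluing parameter $\varphi_i\in Sp_1$ attached to each point (the components $\lambda_2^{(i)},\lambda_3^{(i)}$, and in the $\Bbb{Z}_4$ case the relevant residual ones), and those — essentially the $\lambda_1^{(i)}$ and, for $A_2$, the coefficient governing $\det R_+^{g_t}(p_1^{(i)})$ — that must be killed by varying the conformal infinity $[\gamma]$. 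The first kind is purely local and poses no interaction problem since the $\varphi_i$ are independent. The decisive point is the second kind: I must produce a variation of $[\gamma_0]$ that adjusts the obstruction at $p_i$ without disturbing the already-arranged germ conditions at every other $p_j$.

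Here I would invoke the decoupling asserted in the text just before the statement: the reasoning of Section \ref{sec:germe-au-point} extends immediately to several points because its only essential ingredient is the injectivity of the operators $S$ of Lemmas \ref{lem:fonctions}, \ref{lem:champs} and \ref{lem:extension-sym2}, and that injectivity rests on the unique-continuation theorem of \cite{Maz91c}, which is insensitive to the number of singular points. Concretely, the target spaces of germs become the direct sum $\bigoplus_i \cG_{k}^{0}(p_i)$ (and analogously $\cF_k$, $\cP_k$) over the finitely many points, and the same integration-by-parts identity \eqref{eq:31}, now summed over all $p_i$, shows that the global operator $\sigma\mapsto\tau$ remains injective. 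Passing to the dual exactly as in Lemma \ref{lem:extension-sym2} then yields surjectivity: every prescribed collection of admissible germs $(\sigma_i)_i$, one at each $p_i$, is realized by a single variation of the conformal infinity. In particular I can independently prescribe the second-order germ correction at each $A_2$-point (as in Proposition \ref{prop:germe-A2}) while keeping all germs trivial at the other points.

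With this simultaneous germ-realization in hand, the obstruction-killing becomes a finite-dimensional fixed-point problem. I would assemble the parameters $(t;\varphi_1,\dots,\varphi_m;f_1,\dots,f_N)$, where the $f$'s are the coefficients of the finitely many conformal directions $\dot\gamma$ produced above, and solve the coupled system $\lambda^{(i)}(t)=0$ (and, for the $A_2$-points, the further equation $\det R_+^{g_t}(p_1^{(i)})=0$) by the implicit function theorem, using that the first variations are non-degenerate: the $\varphi_i$-derivatives control the $\lambda_2^{(i)},\lambda_3^{(i)}$, and the $f_j$-derivatives form an invertible block by \eqref{eq:22} together with the analogue of Proposition \ref{prop:germe-A2} at each point, the cross-terms being negligible precisely because the germ-realization is decoupled. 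The main obstacle I anticipate is verifying that this linearized obstruction map is genuinely block-triangular (or at least invertible) once all points are present simultaneously — i.e. that changing $[\gamma]$ to fix $p_i$ really induces no leading-order drift in the obstructions at $p_j$ — and that the non-degeneracy guaranteed by Theorem \ref{theo:1} survives at every stage of the possibly iterated desingularization, so that the $A_2$-points can be resolved in two steps as in Theorem \ref{th:A2} without the residual $A_1$-singularities at different points spoiling the uniform invertibility of $P_{g_t}$ established in Lemma \ref{lemm:inv-P}.
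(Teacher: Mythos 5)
Your key ingredient is the right one — the multi-point extension of the germ lemmas of section \ref{sec:germe-au-point}, which lets you prescribe the germ at one $p_i$ while keeping it trivial at the others — but your overall architecture (a single simultaneous gluing at all points, followed by a coupled finite-dimensional obstruction system solved by the implicit function theorem) is genuinely different from the paper's. The paper's proof is a short induction on the singular points: desingularize $p_1$ alone, using Lemma \ref{lem:extension-sym2} in its several-points version to perturb the conformal infinity so that the obstructions at $p_2,\dots,p_k$ \emph{remain zero}, then invoke Theorem \ref{theo:1} to conclude that the resulting Einstein metric is still non dégénérée, and iterate on a manifold with one fewer singular point. This buys two things that your scheme leaves open. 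First, it never has to verify the invertibility or block-triangularity of the full linearized obstruction map with all points present — precisely the point you flag as your main anticipated obstacle; at each stage one is back in the already-established one-point situation over a non-degenerate background, so the uniform estimate of Lemma \ref{lemm:inv-P} and the single-point obstruction analysis apply verbatim. Second, the sequential order is what produces, and explains, the clause «au plus un point de type $\Bbb{Z}_4$» : after desingularizing a $\Bbb{Z}_4$ point the operator $\bR_+-\Lambda$ of Proposition \ref{prop:L1} has a kernel on $\langle\omega_1,\omega_3\rangle$, so non-degenerescence of the new metric is no longer guaranteed and the induction must stop — hence that point is treated last and there can be only one. Your simultaneous scheme neither uses nor accounts for this restriction, which is the one place where I would say your proposal has a genuine gap rather than merely a different (and more laborious) route: if you insist on the one-shot gluing you must still explain why the statement limits the number of $\Bbb{Z}_4$ points, and if you fall back on iteration for the $A_2$ points (as your last sentence suggests) you are in effect reproducing the paper's induction and should make the preservation of non-degenerescence at each stage, via Theorem \ref{theo:1}, the explicit backbone of the argument.
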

Comme on le verra dans la démonstration, les métriques construites dépendent d'un paramètre par point de type $A_1$ ou $\Bbb{Z}_4$, et de deux paramètres par point de type $A_2$.
\begin{proof}
  Appelons $p_1$, $p_2$,..., $p_k$ les points singuliers, où le point éventuel avec singularité $\Bbb{Z}_4$ est $p_k$. Commençons par désingulariser le point $p_1$ en gardant les singularités aux autres points. Du lemme \ref{lem:extension-sym2} appliqué au cas de plusieurs points résulte que, dans le processus de désingularisation de $p_1$, quitte à perturber l'infini conforme, on peut assurer que les obstructions restent nulles aux autres points $p_i$ pour $i>1$. On obtient donc après désingularisation de $p_1$ une métrique d'Einstein asymptotiquement hyperbolique, non dégénérée, telle que les obstructions continuent à s'annuler aux autres points $p_i$. On peut itérer le processus. On ne met qu'un seul point de type $\Bbb{Z}_4$, et à la fin du processus, car on ne peut plus assurer que la métrique demeure non dégénérée après désingularisation.
\end{proof}

Bien évidemment la proposition précédente n'est pas optimale. Il est plausible qu'on puisse se libérer des hypothèses de non dégénérescence des points $p_i$ et admettre un nombre quelconque de points à singularité $\Bbb{Z}_4$ en modifiant l'analyse faite dans \cite{Biq13}, ce que nous n'avons pas voulu poursuivre ici.


\begin{thebibliography}{10}

\bibitem{And08}
M.~T. Anderson.
\newblock Einstein metrics with prescribed conformal infinity on 4-manifolds.
\newblock {\em Geom. Funct. Anal.}, 18(2):305--366, 2008.

\bibitem{Biq08}
O.~Biquard.
\newblock Continuation unique \`a partir de l'infini conforme pour les
  m\'etriques d'{E}instein.
\newblock {\em Math. Res. Lett.}, 15(6):1091--1099, 2008.

\bibitem{Biq13}
O.~Biquard.
\newblock {D{\'e}singularisation de m{\'e}triques d'Einstein. I}.
\newblock {\em Inventiones Math.}, 192(1):197--252, 2013.

\bibitem{Biq16}
O.~Biquard.
\newblock {D{\'e}singularisation de m{\'e}triques d'Einstein. II}.
\newblock {\em Inventiones Math.}, 204(2):473--504, 2016.

\bibitem{BiqHerRum07}
O.~{Biquard}, M.~{Herzlich}, and M.~{Rumin}.
\newblock {Diabatic limit, eta invariants and Cauchy-Riemann manifolds of
  dimension 3.}
\newblock {\em {Ann. Sci. \'Ec. Norm. Sup\'er. (4)}}, 40(4):589--631, 2007.

\bibitem{BiqRol15}
O.~{Biquard} and Y.~{Rollin}.
\newblock {Smoothing singular constant scalar curvature K\"ahler surfaces and
  minimal Lagrangians.}
\newblock {\em {Adv. Math.}}, 285:980--1024, 2015.

\bibitem{FefGra85}
C.~Fefferman and C.~R. Graham.
\newblock Conformal invariants.
\newblock {\em Ast\'erisque}, (hors s\'erie):95--116, 1985.
\newblock The mathematical heritage of \'Elie Cartan (Lyon, 1984).

\bibitem{Kro89a}
P.~B. Kronheimer.
\newblock {The construction of ALE spaces as hyper-K\"ahler quotients.}
\newblock {\em J. Differential Geom.}, 29(3):665--683, 1989.

\bibitem{Maz91}
R.~Mazzeo.
\newblock Elliptic theory of differential edge operators. {I}.
\newblock {\em Comm. Partial Differential Equations}, 16(10):1615--1664, 1991.

\bibitem{Maz91c}
R.~Mazzeo.
\newblock Unique continuation at infinity and embedded eigenvalues for
  asymptotically hyperbolic manifolds.
\newblock {\em Amer. J. Math.}, 113:25--45, 1991.

\bibitem{MorVia}
P.~Morteza and J.~Viaclovsky.
\newblock {The Calabi metric and desingularization of Einstein orbifolds}.
\newblock \texttt{arXiv:1610.02428}.

\bibitem{Suv12}
I.~{\c Suvaina}.
\newblock {ALE Ricci-flat K\"ahler metrics and deformations of quotient surface
  singularities.}
\newblock {\em {Ann. Global Anal. Geom.}}, 41(1):109--123, 2012.

\end{thebibliography}

\end{document}